\newcommand{\R}{\mathbb{R}}
\newcommand{\Sb}{\mathbb{S}}
\newcommand{\B}{\mathcal{B}}
\newcommand{\M}{\mathcal{M}}
\newcommand{\dd}{\mathrm{d}}
\newcommand{\e}{\varepsilon}
\newcommand{\ie}{\textit{i.e.}}
\newcommand{\pa}{\partial}
\newcommand{\mph}{\mathfrak{m}_{\varphi}}
\newtheorem{proposition}{Proposition}
\newtheorem{theorem}{Theorem}
\newtheorem{definition}{Definition}
\newtheorem{lemma}{Lemma}
\newtheorem{remark}{Remark}
\def\tb#1{\textcolor{violet}{TB: #1}} %% pour Thomas
\def\lb#1{\textcolor{magenta}{LB: #1}} %% pour Laurent
\def\jm#1{\textcolor{cyan}{JM: #1}} %% pour Julien
\title[Polyatomic gas modelling with quasi-resonant collisions]{A kinetic model for polyatomic gas with quasi-resonant collisions leading to bi-temperature relaxation processes}
\author[T. Borsoni]{Thomas Borsoni}
\address{T.B.: CERMICS, École des Ponts et Chaussées - Institut Polytechnique de Paris \& Inria Saclay, France \& Sorbonne Universit\'e, Universit\'e Paris Cit\'e, CNRS, Laboratoire Jacques-Louis Lions, LJLL, F-75005 Paris, France}
\email{thomas.borsoni@enpc.fr}
\author[L. Boudin]{Laurent Boudin}
\address{L.B.: Sorbonne Universit\'e, Universit\'e Paris Cit\'e, CNRS, Laboratoire Jacques-Louis Lions, LJLL, F-75005 Paris, France}
\email{laurent.boudin@sorbonne-universite.fr}
\author[J. Mathiaud]{Julien Mathiaud}
\address{J.M.: Université de Rennes, CNRS, IRMAR, UMR 6625, Rennes, France }
\email{julien.mathiaud@univ-rennes.fr}
\author[F. Salvarani]{Francesco Salvarani}
\address{F.S.: De Vinci Higher Education, De Vinci Research Center, Paris, France \& Dipartimento di Matematica ``F. Casorati'', Universit\`a degli Studi di Pavia, Via Ferrata 1, 27100 Pavia,
Italy}
\email{francesco.salvarani@unipv.it}
\thanks{This article has been partially funded by the Starting Grant ERC project HighLEAP (High-dimensional mathematical methods for LargE Agent and Particle systems), awarded to Virginie Ehrlacher. Partial support by INdAM-GNFM is also acknowledged.}
\date{\today}
\keywords{Boltzmann equation; Polyatomic gases; resonant collisions; quasi-resonant collisions; near-resonance; Landau--Teller equations.}
\begin{document}

\maketitle
 
\begin{abstract}
In this article, we extend the Boltzmann framework for polyatomic gases by introducing quasi-resonant kernels, which relax resonant interactions, for which kinetic and internal energies are separately conserved and lead to equilibrium states with two temperatures. We establish an $H$-theorem and analyze the quasi-resonant model's asymptotic behaviour, demonstrating a two-phase relaxation process: an initial convergence towards a two-temperature Maxwellian state followed by gradual relaxation of the two temperatures towards each other. Numerical simulations validate our theoretical predictions. The notion of quasi-resonance provides the first rigorous framework of a Boltzmann dynamics for which the distribution is at all times close to a multi-temperature Maxwellian, relaxing towards a one-temperature Maxwellian.
\end{abstract}

%\tb{Dans le titre je ne comprends pas la partie ``two-temperature resonant asymptotic behaviour''. Que pensez-vous de ``A kinetic model for polyatomic gas with quasi-resonant collisions leading to bi-temperature relaxation processes'' ou quelque chose du genre ?}

%\lb{Les propositions de Thomas sur le titre et l'abstract me conviennent, mais comme c'est sensible, je vous laisse aussi en juger, Francesco, Julien.}
%\fs{FS: D'accord pour moi.}

\section{Introduction}

In this article, we propose a rigorous Boltzmann framework for modelling a polyatomic gas whose molecules undergo quasi-resonant collisions, relying on collision selection through the collision kernel. We discuss the behaviour of the corresponding homogeneous dynamic, for which we expect the distribution to be at all times close to a two-temperature Maxwellian, which temperatures relax towards each other, following a Landau-Teller type equation. This is finally numerically verified.

The study of polyatomic gases has garnered significant interest in recent years, with numerous researchers contributing to this field.
%Paragraphe ci-dessous deplacé plus loin
%\color{teal}{Since the 1960s, several authors have proposed kinetic models that preserve fundamental global properties of the original Boltzmann equation while neglecting detailed collision mechanisms \cite{ros-maz, morse1964kinetic, holway1966new, rykov1975model}. Some authors have partially modelled the collision mechanism at the level of the collisional rules in a Boltzmann-like framework, either with a continuous internal energy independent variable or with a finite number of discrete energy levels, see, for example, \cite{MR1277241, BISI2020124441, desvillettes1997modele, MR2118066, rossani1998kinetic, rossani1999note, boudin:hal-03629556, MR4619927, MR4527109, gro-spi}.  Another approach consists in replacing the entire collision terms by simpler models of BGK-type or ellipsoidal statistical ones \cite{aoki2021note, 7, morse1964kinetic, 41, 49, 55, MR4010777, MATHIAUD202265, MR3780339}.} \color{black}
Polyatomic gases differ from monatomic gases because their molecular structure cannot be fully described using only kinematic variables such as position and velocity. To account for these differences, an additional independent variable representing internal structure is required. The first works in this direction date back to the 1950s with Wang Chang and Uhlenbeck~\cite{chang1951transport} and Taxman~\cite{taxman1958classical}.  A significant contribution was provided by Borgnakke and Larsen in~\cite{bor-lar-75}, introducing a numerically well-suited procedure. In this paper, we describe the internal structure of the polyatomic molecules through their internal energy level, denoted by $I$, and belonging to $\R_+$ endowed with a measure $\mu$, called the internal energy law. We assume $\mu$ to have a nonnegative and continuous density $\varphi$ with respect to the Lebesgue measure as in the seminal works \cite{MR1277241, desvillettes1997modele, MR2118066}, \ie
\[\dd \mu(I) = \varphi(I)\,\dd I, \qquad I>0. \]
Some typical choices of $\varphi$ are $\varphi : I \mapsto I^{\delta/2-1}$, corresponding to a polytropic gas with $3 + \delta$ number of degrees of freedom, and $\varphi : I \mapsto \lceil I \rceil$, corresponding to a diatomic molecule, whose rotation is classically described via a rigid-rotor model, and vibration is described via a quantum harmonic oscillator model, for which we refer to~\cite{bisi2025modelling}.
Back in the 1960's, it was already shown in~\cite{sharma1969near} that vibrational energy transfers between $\mathrm{CO}_2$ isotopes would exhibit \textit{near-resonance} behaviour, in the sense that collision pathways for which the post-collision vibrational energy is close to the pre-collisional one are largely dominant.
 This was later supported by~\cite{stephenson1972temperature} for a mixture involving in particular $\mathrm{CO}_2$ molecules. Detailed computations on collision cross-sections involving the $\mathrm{CO}_2$ molecule recently conducted~\cite{lom-fag-pac-gro-15} also showed such a behaviour. Similar results for other molecules, either with experimental data or ab initio calculations, have also been pointed out since then~\cite{brenner1981near, sharma2006near}. Early discussions on resonant vibrational energy transfers are also found in~\cite{mahan1967resonant, nikitin1962resonance}. Ab initio calculations conducted in~\cite{lanza2014near} on the rotational energy transfer in  $\mathrm{HCl-H}_2$ collisions were shown to be mostly near-resonant, and to quote the authors of the latter work, ``Near resonant energy transfer process can be an important, often dominant, rotational energy transfer pathway at low temperatures''.

As such, despite their broad applicability and widespread use, polyatomic Boltzmann models with standard cross-sections, such as Variable Hard Spheres, exhibit inconsistencies with experimental data and theoretical calculations in the above mentioned scenarios.
%This is particularly clear in high-temperature gas dynamics, such as collisions involving selectively excited $\textnormal{CO}_2$ or 
%$\textnormal{C}_2\textnormal{H}_2$ molecules, see, for instance, \cite{lom-fag-pac-gro-15, PhysRevLett.117.143902, leh-17}. 
As noted in \cite{lom-fag-pac-gro-15}, the kinetic modelling of carbon dioxide presents some peculiar challenges due to the emergence of two distinct temperatures associated with translational and rotational energies, which do not mutually interact.
%The same behaviour has been observed in the case of sodium atoms in a laser beam
%\cite{EICHHORN1978199}. 
Consequently, in the mathematical models describing this framework, the kinetic and internal energy of colliding particles must be separately, or almost separately,  conserved. 
%Such collisions are termed ``resonant'' due to their analogy with resonance in acoustics.

%Since most practical applications involve gases in nonequilibrium conditions, employing the Boltzmann equation or related models is essential. 
The first advancements in this direction were made by Aoki \textit{et al.} \cite{MR4010777}, who extended the ellipsoidal statistical model of the Boltzmann equation for a polyatomic gas with constant specific heats (proposed in \cite{andries}) to a polyatomic gas with temperature-dependent specific heats.
Some other articles have then studied the complex dynamics of such gases \cite{PhysRevFluids.3.023401, PhysRevE.102.023104, MR4010777, aoki2021note, Kosuge2023} in an ES-BGK framework, highlighting several properties of resonant collisions. 

Since the 1960s, several authors have proposed kinetic models that preserve fundamental global properties of the original Boltzmann equation while neglecting detailed collision mechanisms \cite{ros-maz, morse1964kinetic, holway1966new, rykov1975model}. 
%Some authors have partially modelled the collision mechanism at the level of the collisional rules in a Boltzmann-like framework, either with a continuous internal energy independent variable or with a finite number of discrete energy levels, see, for example, \cite{MR1277241, BISI2020124441, desvillettes1997modele, MR2118066, rossani1998kinetic, rossani1999note, boudin:hal-03629556, MR4619927, MR4527109, gro-spi}. \tb{j'ai l'impression que cette phrase "Some..." parle en même temps du modèle des molécules et du modèle des collisions, que je pense il serait bon de clairement différentier. Aussi, je serais d'avis de ne pas seulement mentionner la dichomie énergie continue / énergie discrète mais aussi des modèles plus riches  // Avec Marzia et Maria on a un preprint \cite{bisi2025modelling} qui parle de la modélisation des molécules polyatomiques et qui je pense peut avoir sa place à cet endroit}  Another approach consists in replacing the entire collision terms by simpler models of BGK-type or ellipsoidal statistical ones \cite{aoki2021note, 7, morse1964kinetic, 41, 49, 55, MR4010777, MATHIAUD202265, MR3780339}.

%Indeed, due to limited knowledge of the detailed dynamics of collision processes, transition probabilities, which are essential in the Boltzmann description, are either derived from phenomenological assumptions \cite{morse1964kinetic} or obtained via molecular dynamics simulations for specific gases \cite{nagnibeda2009non}.
Indeed, transition probabilities, which are essential in the Boltzmann description, are difficult to compute \textit{ab initio}, and can be derived from phenomenological assumptions \cite{morse1964kinetic} or obtained via molecular dynamics simulations for specific gases \cite{nagnibeda2009non}.% in the polyatomic context and , are  derived from phenomenological assumptions \cite{morse1964kinetic} or obtained via molecular dynamics simulations for specific gases \cite{nagnibeda2009non}.
%\lb{Ce paragraphe arrive comme un cheveu sur la soupe ici, je trouve, mais je ne vois pas bien où le déplacer. Des idées, quelqu'un~?}
%\fs{On a enlev\'e le texte qui parle des $b^{qr}$, c'est la raison pour laquelle ce n'est pas utile de le garder ici. Mais je pense qu'on peut le d\'eplacer. Il faut toutefois souligner qu'il y a un espace d'am\'elioration sur les cross sections.} \tb{je verrais bien ce paragraphe placé en $\ddag$ et précédé du paragraphe $\dag$.}

More recently, a Boltzmann model to describe resonant collisions has been introduced in \cite{boudin:hal-03629556}. The equilibrium solution of the model depends on two temperatures, and several properties, including the $H$-theorem, have been deduced. Given the recent introduction of the corresponding model in the literature, the mathematical study of resonant collision kernels remains less developed compared to the standard Boltzmann operator. Nonetheless, significant progress has been made, including investigations into the compactness of the linearized operator \cite{borsonicompact, borsonicompactcorrect}. Let us briefly discuss that resonant model. As already stated, a collision is said to be resonant when the microscopic kinetic and internal energies are \emph{separately} conserved during the collision together with the microscopic momentum.
%, in the sense that
%\begin{align}
%    v + v_* &= v' + v'_* \\
%    |v|^2 + |v_*|^2 &= |v'|^2 + |v'_*|^2 \\
%    I + I_* &= I' + I'_*.
%\end{align}
Hence, formally, the model rises as a singular case of polyatomic collisions exchanging momentum, kinetic energy and internal energy. Since the conservation laws are directly related to the support of the collision kernel, the resonant property of the collisions can be provided by formally putting the internal energy conservation in the collision kernel as a Dirac mass. 
%we can formally say that, if $b$ is a reference usual polyatomic kernel, a resonant kernel could be understood as $b \times \delta_{\{I+I_* = I'+I'_*\}}$. 
With this viewpoint, a natural way to define quasi-resonant collision kernels is to replace the Dirac mass by a 
%regular -> on utilise des indicatrices donc pas régulières
nonnegative approximation of that Dirac mass in the kernel expression. 

In this work, we propose a Boltzmann model with quasi-resonant collisions, providing a more realistic framework than the resonant one, and allowing to capture slow bi-temperature relaxations, during which the distribution is at all times close to a bi-temperature Maxwellian.
From a theoretical perspective, incorporating quasi-resonant kernels extends classical thermal relaxation models within the Boltzmann framework and provides insight into transitions between dynamical regimes in kinetic transport. This approach bridges the gap between idealized resonant interactions and the complexity of real molecular systems, offering a more general and physically consistent description of energy exchange. 

The article is structured as follows.

In the next Section, we introduce the model and establish a precise definition of quasi-resonance quantified by
%using 
a quasi-resonance parameter $\e > 0$. We then derive an $H$-theorem for this model and investigate the resonant asymptotics as $\e$ vanishes, recovering the version of the resonant model used in \cite{borsonicompactcorrect} 
%with the rightful 
equipped with the correct 
internal energy measure. %These results confirm that the model effectively captures quasi-resonance when $\varepsilon$ is small.

In Section~\ref{sec:2kasires}, we explore the implications of these findings and discuss the expected properties of the model. We argue that a homogeneous quasi-resonant Boltzmann dynamics likely follows a two-phase evolution. Initially, in the short-term regime, the distribution behaves similarly to the fully resonant case and relaxes toward a two-temperature Maxwellian equilibrium. Subsequently, in the long-term regime, the distribution remains in this form while the kinetic and internal temperatures gradually converge towards each other. Assuming that latter behaviour holds, we explicitly derive an ODE system which approximately governs the evolution of the kinetic and internal temperatures during the second phase. This system, valid for a broad class of collision kernels, is of Landau--Teller-type, see, for instance, \cite{nikitin200870}.

Finally, in Section~\ref{section:numerical_quasires}, we conduct numerical simulations to validate our theoretical predictions. Specifically, we verify that the kinetic and internal temperatures approximately follow the derived Landau--Teller-type ODE system, and conduct a preliminary quantitative study of the behaviour of the approximation as the quasi-resonance parameter $\e$ goes to $0$.

\section{Boltzmann model with quasi-resonant collisions} \label{sec:1kasires}

%The idea developed here consists in a polyatomic kinetic model where one selects quasi-resonant collisions
The approach developed here is based on a polyatomic kinetic framework wherein quasi-resonant collisions are selected
by truncating the cross-section. 

We study a single gas composed of polyatomic molecules, which are described, at the microscopic level, by their mass $m>0$, velocity $v \in \R^3$ and internal energy level $I \in \R_+$. We then consider two colliding molecules with respective incoming velocities $v$, $v_*\in\R^3$ and internal energies $I$, $I_* \in \R_+$ and their outgoing counterparts $v'$, $v_*'$, $I'$, $I_*'$. Since the momentum and total energy of the two-particle (isolated) system are conserved during the collision, the following microscopic conservation laws hold
\begin{align}
    v + v_* &= v' + v'_*, \label{e:momconserv} \\
 \frac12 m   |v|^2 + \frac12 m  |v_*|^2 + I + I_* &= \frac12 m  |v'|^2 + \frac12 m  |v'_*|^2 + I' + I'_*. \label{e:totalenrgconserv}
\end{align}
It is important to notice that the total energy in the center of mass of the molecules
\begin{equation} \label{e:defE}
    E=\frac{m}{4}|v-v_*|^2 + I + I_*=\frac{m}{4}|v'-v_*'|^2 + I' + I_*'
\end{equation} 
is also conserved during the collision. Then \eqref{e:momconserv}--\eqref{e:totalenrgconserv} ensure the existence of $\sigma\in\Sb^2$ such that 
%\begin{equation} \label{e:prepostvit}
%v' = \frac{v+v_*}{2} + \frac12\sqrt{E-(I'+I'_*)} \, \sigma, \qquad v'_* = \frac{v+v_*}{2} - \frac12\sqrt{E-(I'+I'_*)} \, \sigma, 
%\end{equation}
\begin{equation} \label{e:prepostvit}
v' = \frac{v+v_*}{2} + \frac{2}{\sqrt{m}}\sqrt{E-(I'+I'_*)} \, \sigma, \qquad v'_* = \frac{v+v_*}{2} -
\frac{2}{\sqrt{m}} \sqrt{E-(I'+I'_*)} \, \sigma, 
\end{equation}
where $I'+I'_* \leq E$. We moreover introduce the following notations
\begin{equation} \label{e:defRRprime}
    R  = \frac{m|v'-v_*'|^2}{4E} = 1- \frac{I' + I_*'}{E}, 
    \qquad R' = \frac{m|v-v_*|^2}{4E}= 1- \frac{I + I_*}{E},
\end{equation}
as in the Borgnakke-Larsen description, see \cite{MR1277241} for instance. At the microscopic level, resonant collisions, for which the kinetic and internal energy are separately conserved, can thus be characterized by the property 
$$R=R'.$$
In the quasi-resonant case, both the kinetic and internal energies are separately almost conserved, \ie
\[ |v'|^2 + |v'_*|^2 \approx |v|^2 + |v_*|^2,\qquad I' + I'_* \approx I + I_*,\]
or equivalently, 
\[
R \approx R'.
\]
Our approach, detailed in Subsection~\ref{ss:qrtrunc}, consists in considering some reference polyatomic collisions, and removing the ones which are deemed too far from being resonant. 

\subsection{Quasi-resonance by collision kernel truncation} \label{ss:qrtrunc}

In the Boltzmann model, the collisional properties are embedded in the collision kernel. It is a positive function whose support (related to its domain of definition) is provided by the microscopic momentum and energy conservation laws, and whose values formally correspond to some density of probability of possible collisions. Therefore, it is equivalent to state that a collision is possible, and that the collision kernel takes a positive value on the variables involved in the latter. 
%We introduce a quasi-resonance parameter $\e>0$. Our quasi-resonant collision kernel $B_\e$ is then the product of a reference polyatomic kernel $B$ by a characteristic function $\chi_\e$, the latter allowing to remove collisions too far from being resonant. Figure~\ref{fig:schematic} provides a schematic representation of the situation.
We consider a reference collision kernel $B$ whose support characterizes standard polyatomic collisions. From that reference kernel, we define the family of quasi-resonant collision kernels $(B_{\e})_{\e > 0}$, indexed by the quasi-resonant parameter $\e>0$, as the product of $B$ with a cut-off function $\chi_\e$. This function, defined below in \eqref{e:defchiepsi}, allows the removal of collisions deemed too far from being resonant relatively to the parameter $\e$, and is built in such a way that it approximates a Dirac mass (corresponding to the resonant case) when $\e$ vanishes. Figure~\ref{fig:schematic} provides a schematic representation of the situation.

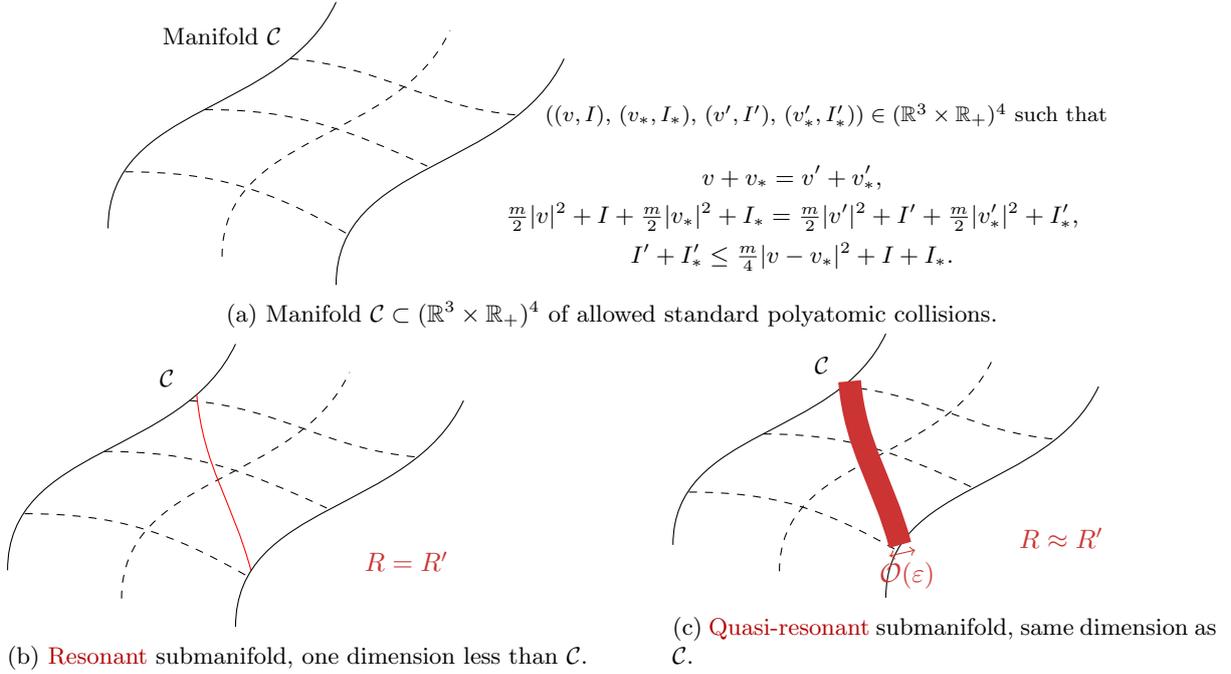
\begin{figure}[!ht]
    \centering
    \begin{subfigure}{0.9\textwidth}
        \centering
\begin{tikzpicture}[scale = .75]
\draw[fill=black!50] (2,3.4) node{\small Manifold $\mathcal{C}$};
    \draw[black] (0,0) to[out=90, in=-115]  (4, 4);
    \draw[black] (4,-1) to[out=90, in=-115]  (8, 3);
    \draw[dashed, black] (2,-.5) to[out=90, in=-115]  (6, 3.5);
    
    \draw[dashed, black] (.3,1) to[out=0, in=150]  (4.2, -.1);
    
    \draw[dashed, black] (1.7,2.1) to[out=0, in=155]  (5.6, 1.1);
    
    \draw[dashed, black] (3.2,3) to[out=-5, in=175]  (7.2, 2);
\draw[fill=black!50] (12.6,2) node{{\footnotesize $((v,I), \, (v_*,I_*), \, (v',I'), \, (v'_*,I'_*) ) \in (\R^3 \times \R_+)^4$  such that}};
\draw[fill=black!50] (12,.9) node{{\small $v+ v_* = v' + v'_*$,}};
\draw[fill=black!50] (12,.2) node{{\small $\frac{m}{2} |v|^2 + I + \frac{m}{2} |v_*|^2 + I_*  = \frac{m}{2} |v'|^2 + I'  + \frac{m}{2} |v'_*|^2 + I'_* $,}};
\draw[fill=black!50] (12,-.5) node{{\small $I' + I'_* \leq \frac{m}{4}|v-v_*|^2 + I + I_*$.}};
\end{tikzpicture}
\caption{Manifold $\mathcal{C} \subset (\R^3 \times \R_+)^4$ of allowed standard polyatomic collisions.}
    \label{fig:poly_mani}
\end{subfigure}
\begin{subfigure}{0.48\textwidth}
\begin{tikzpicture}[scale = .75]
\draw[fill=black!50] (2.8,3.4) node{\small $\mathcal{C}$};
    \draw[black] (0,0) to[out=90, in=-115]  (4, 4);
    \draw[black] (4,-1) to[out=90, in=-115]  (8, 3);
    \draw[dashed, black] (2,-.5) to[out=90, in=-115]  (6, 3.5);
    
    \draw[dashed, black] (.3,1) to[out=0, in=150]  (4.2, -.1);
    
    \draw[dashed, black] (1.7,2.1) to[out=0, in=155]  (5.6, 1.1);
    
    \draw[dashed, black] (3.2,3) to[out=-5, in=175]  (7.2, 2);

\draw[red] (3.32,3.1) to[out=-85, in=105]  (4.27, 0);

\draw[red!60!gray, below] (7, .5) node{$\, R = R' \, $};

\end{tikzpicture}
\caption{\textcolor{red!70!black}{Resonant} submanifold, one dimension less than $\mathcal{C}$.}
    \label{fig:res_mani}
\end{subfigure}
    \hfill
\begin{subfigure}{0.45\textwidth}
\begin{tikzpicture}[scale = .7]
\draw[fill=black!50] (2.8,3.4) node{\small $\mathcal{C}$};
    \draw[black] (0,0) to[out=90, in=-115]  (4, 4);
    \draw[black] (4,-1) to[out=90, in=-115]  (8, 3);
    \draw[dashed, black] (2,-.5) to[out=90, in=-115]  (6, 3.5);
    
    \draw[dashed, black] (.3,1) to[out=0, in=150]  (4.2, -.1);
    
    \draw[dashed, black] (1.7,2.1) to[out=0, in=155]  (5.6, 1.1);
    
    \draw[dashed, black] (3.2,3) to[out=-5, in=175]  (7.2, 2);

\draw[line width = 3mm, red!60!gray] (3.32,3.1) to[out=-85, in=105]   (4.27, 0);

\draw[<->, red!60!gray] (4.07, -.21) -- (4.55, -.1);

\draw[red!60!gray, below] (4.4, -.15) node{$\mathcal{O}(\e)$};

\draw[red!60!gray, below] (7.3, .5) node{$\, R \approx R' \, $};

\end{tikzpicture}
\caption{\textcolor{red!70!black}{Quasi-resonant} submanifold, same dimension as $\mathcal{C}$.}
    \label{fig:QR_mani}
    \end{subfigure}
    \caption{Standard, resonant and quasi-resonant manifolds of allowed collision quadruplets in $(\R^3 \times \R_+)^4$. }%\lb{la notion de $\e$-qr manifold n'est définie que deux pages plus loin, je ne pense pas que ce soit justifié de mettre le $\e$ ici.}}
    \label{fig:schematic}
\end{figure}

In the standard polyatomic setting, the collision kernel seen as a measure on $(\R^3 \times \R_+)^4$ is supported on a manifold $\mathcal{C}$, as shown on Subfigure~\ref{fig:poly_mani}. Then the intersection of $\mathcal C$ with the set of quadruplets $\left((v,I),(v_*,I_*),(v',I'),(v_*',I_*') \right)$ such that $R=R'$ corresponds to the resonant manifold seen on Subfigure~\ref{fig:res_mani}. In the quasi-resonant setting drawn on Subfigure~\ref{fig:QR_mani}, the collision kernel support coincides with the support of the cut-off function $\chi_\e$, which is a narrow strip within $\mathcal{C}$ surrounding the resonant submanifold, with a width of order $\e$. The quasi-resonant model is then built such that, as $\e$ goes to $0$, the quasi-resonant collision kernel measure gets closer to the resonant collision kernel measure. 
%The quasi-resonant strip is eventually obtained using a suitable function $\eta : (0,1) \to \R$ as the submanifold of $\mathcal{C}$ where $|\eta(R)-\eta(R')| \leq \e$.

%Let us now be more accurate.
To make this argument more precise, we introduce the following formalism.

\subsubsection*{Reference collision kernel} We introduce $B:= B(v,v_*,I,I_*,I',I'_*,\sigma)\ge 0$ a reference polyatomic collision kernel,
in the sense that it satisfies, for almost every $v$, $v_* \in \R^3$, $I$, $I_*$, $I'$, $I_*'\in\R_+$, $\sigma\in \Sb^2$, a symmetry property
\begin{equation} \label{e:genBsymm}
B(v,v_*,I,I_*,I',I'_*,\sigma) =  B(v_*,v,I_*,I,I'_*,I',-\sigma),
\end{equation}
a micro-reversibility assumption
\begin{equation} \label{e:genBmicrorev}
|v-v_*| B(v,v_*,I,I_*,I',I'_*,\sigma) = |v'-v'_*|B(v',v'_*,I',I'_*,I,I_*,\sigma'), 
\end{equation}
and a positivity condition to authorize only physical collisions, that is
\begin{equation} \label{e:genB>0}
 I' + I'_* \leq E  ~\iff~ B(v,v_*,I,I_*,I',I'_*,\sigma) > 0,
\end{equation}
remembering that $E$ is defined in \eqref{e:defE}. These assumptions may be found for instance in \cite{borsoni2022general}, and are compatible with other standard formulations as in \cite{desvillettes1997modele,gamba2020cauchy}, as soon as the corresponding Boltzmann operator is defined accordingly.

A possible reference collision kernel, which we later use for explicit computations in Sections~\ref{sec:2kasires}--\ref{section:numerical_quasires}, would be
\begin{multline*} %\label{eq:refkernel_computations}
B(v,v_*,I,I_*,I',I'_*,\sigma) = C_B \; b(\sigma \cdot \sigma') \; \left[\frac{m}{4}|v-v_*|^2\right]^{\kappa_k-1/2}  \;\left[\frac{m}{4}|v'-v_*'|^2\right]^{\kappa_k} \\ (I+I_*)^{\kappa_i}  \; (I'+I_*')^{\kappa_i} \; E^{\gamma} \; \frac{\mathds{1}_{[I'+I'_* \leq E]} }{\mathfrak{m}_\varphi (E)},
\end{multline*} 
%B(v,v_*,I,I_*,I',I'_*,\sigma) = C_B \; b(\sigma \cdot \sigma') \; E_k^{\kappa_k-1/2}  \;{E_k'}^{\kappa_k} \; E_i^{\kappa_i}  \; {E_i'}^{\kappa_i} \; E^{\gamma} \; \frac{\mathds{1}_{[I'+I'_* \leq E]} }{\mathfrak{m}_\varphi (E)},\end{equation}
%\color{orange}
%\begin{equation} \label{eq;refkernel_computations}
%B(v,v_*,I,I_*,I',I'_*,\sigma) = C_B \; b(\sigma \cdot \sigma') \; E_k^{\beta_k}  \;{E_k'}^{\beta_k + \frac12} \; E_i^{\beta_i + \frac12}  \; {E_i'}^{\beta_i + \frac12} \; E^{\gamma - 2} \; \frac{\mathds{1}_{[I'+I'_* \leq E]} }{\mathfrak{m}_\varphi (E)}.
%\end{equation}
where %we use the notations
%$$E_k =\frac{m}{4}|v-v_*|^2, \qquad E_i=I+I_*, \qquad E_k'=\frac{m}{4}|v'-v'_*|^2, \qquad E_i' = I'+I'_*,$$ 
the velocities $v'$ and $v'_*$ are functions of the variables of $B$ given by \eqref{e:prepostvit},  $\kappa_k$, $\kappa_i$ and $\gamma$ are real numbers, $C_B > 0$ is a (dimensional) constant, and $b$ is an angular kernel satisfying, for $\omega\in\Sb^2$,
\begin{equation*} %\label{eq:angular_quasires}
\int_{\Sb^2} b(\sigma \cdot \omega) \, \dd \sigma = 1.
\end{equation*}
Eventually, the notation $\mathds{1}_X$ stands for the characteristic function of the set $X$, and $\mathfrak{m}_\varphi (E)$ is the $L^1$-norm of $\mathds{1}_{[I'+I'_* \leq E]}$ on $(0,+\infty)^2$ with respect to the internal energy distribution measure, that is 
\begin{equation} \label{e:gfunction}
\mathfrak{m}_\varphi (E) = \iint_{(0,+\infty)^2} \, \mathds{1}_{[I'+I'_* \leq E]} \, \varphi(I')\, \varphi(I_*')\, \dd I' \, \dd I_*'.
\end{equation}
We emphasize again that the above explicit choice of $B$ is not required at all in this current section. 

\begin{comment}
\begin{remark}
As example of such kernel is
\begin{equation*}
B(v,v_*,I,I_*,I',I'_*,\sigma) = C_B \; b(\sigma \cdot \sigma') \; E_k^{\kappa_k-\frac12}  \;{E_k'}^{\kappa_k} \; E_i^{\kappa_i}  \; {E_i'}^{\kappa_i} \; E^{\gamma} \; \frac{\mathds{1}_{[I'+I'_* \leq E]} }{\mathfrak{m}_\varphi (E)},
\end{equation*}
as we consider in a further Section, see~\eqref{eq:refkernel_computations}.
\end{remark}
\end{comment}

\subsubsection*{Truncation function and quasi-resonant collision kernel}  
Let us now build the quasi-resonant family of kernels $(B_\e)_{\e > 0}$ from the reference kernel $B$. In order to discriminate whether a collision is deemed quasi-resonant or not, relatively to $\e$, we introduce the family of cut-off functions $(\chi_\e)_{\e>0}$, defined, for all $\e>0$, by 
\begin{equation} \label{e:defchiepsi}
    \chi_{\e}(R,R') = \frac{c_\eta(R,R')}{2 \e} \, \mathds{1}_{\left[|\eta(R)-\eta(R')| \leq \e \right]}, \qquad \quad R,R' \in (0,1).%,
\end{equation}
%where 
Here, $\eta$ is a $\mathcal{C}^\infty$-diffeomorphism from $(0,1)$ to $\R$, and $c_{\eta}$ is a $\mathcal{C}^1$ normalizing factor, satisfying a symmetry property
\begin{equation} \label{e:symmetry_ceta}
    c_{\eta}(R,R') = c_{\eta}(R',R), \qquad R,R' \in (0,1),
\end{equation}
which is required to preserve the micro-reversibility of $B_\e$, and a diagonal condition
\begin{equation} \label{e:reson-c_eta}
    c_\eta(R,R)  = \eta'(R), \qquad R \in (0,1),
\end{equation}
which is crucial to recover the resonant asymptotics when $\e$ goes to $0$, as we shall see below. 
%Note that possible 
Possible choices for $c_\eta$ would be $c_\eta(R,R') = \sqrt{\eta'(R)\eta'(R')}$ or $c_\eta(R,R') = (\eta'(R) + \eta'(R'))/2$, but the upcoming computations can be carried out by keeping $c_\eta$ general. Indeed, we shall see that the study of the $\e$-vanishing asymptotics only requires, at the first non-zero order with respect to $\e$, the knowledge of conditions~\eqref{e:symmetry_ceta}--\eqref{e:reson-c_eta}.

We emphasize that the cut-off functions $(\chi_\e)$ fully characterize the notion of quasi-resonance. Indeed, at fixed $\e>0$, the collisions deemed to be quasi-resonant are exactly the ones for which $\chi_\e(R,R')>0$. Moreover, the support of $\chi_\e$ goes to $(0,1)^2$ (all collisions are allowed: standard polyatomic case) when $\e \to \infty$, and that $\chi_\e$ approximates the Dirac at $R=R'$ (resonant collisions) when $\e \to 0$.

\begin{remark} \label{r:deriv-c_eta}
The symmetry property of $c_\eta$ implies that $\pa_1 c_\eta(R,R')=\pa_2 c_\eta(R',R)$ for any $R$, $R'\in (0,1)$. Besides, by differentiating \eqref{e:reson-c_eta}, we get, for any $R\in (0,1)$, $\pa_1 c_\eta(R,R)+\pa_2 c_\eta(R,R)=\eta''(R)$, so that, for any $R\in (0,1)$, 
\begin{equation} \label{e:deriv-c_eta}
    \pa_1 c_\eta(R,R)=\pa_2 c_\eta(R,R) = \frac{\eta''(R)}{2}.
\end{equation}
\end{remark}

In the expression \eqref{e:defchiepsi} of the truncation function, the $\mathcal{C}^\infty$-diffeomorphism $\eta$ is used to enhance the model flexibility and prevent boundary issues. Indeed, using $\mathds{1}_{[|R-R'|\leq \e]}$ in \eqref{e:defchiepsi} would be problematic. For instance, when $R' < \e$ is fixed, the set $\{R \in (0,1) \mid |R-R'|\leq \e \}$ becomes asymmetric due to the constraint $R > 0$. Using instead $\mathds{1}_{[|\eta(R)-\eta(R')|\leq \e]}$ avoids this issue. 

\begin{comment}
\begin{remark}
In the truncation function~\eqref{e:defchiepsi}, we have introduced a $\mathcal{C}^\infty$-diffeomorphism $\eta : (0,1) \to \R$. This choice is made to provide flexibility to the model and to \emph{avoid boundary issues}. In fact, it would not be precise to consider $\mathds{1}_{[|R-R'|\leq \e]}$ in~\eqref{e:defchiepsi}, as, for some fixed $R' < \e $, their is an asymmetry in the set $\{R \in (0,1), \; |R-R'|\leq \e \}$ because of the requirement $R > 0$. Using $\mathds{1}_{[|\eta(R)-\eta(R')|\leq \e]}$ instead avoids this issue. A typical choice of $\eta$ (the one we later use in our explicit computations) would be
\begin{equation*} 
\eta : (0,1) \to \R, ~~~R \mapsto \log \left[\frac{R}{1-R}\right],
\end{equation*}
which is an increasing smooth diffeomorphism from $(0,1)$ to $\R$. The quasi-resonance is measured through the quantity $\eta(R)-\eta(R')$, which in this case corresponds to
$$\eta(R)-\eta(R') = \left[\log(E'_k) - \log(E'_i)\right] - \left[\log(E_k)-\log(E_i) \right],$$
with $E_k = \frac{m}{4} |v-v_*|^2$, $E_i = I+I_*$ and a similar definition, with prime variables, holds for $E_k'$ and $E'_i$. The difference $\eta(R)-\eta(R')$ hence allows to simultaneously quantify the discrepancy between prime and non-prime energies, but also between kinetic and internal ones. Moreover, due to the presence of the $\log$, the comparison is made through \emph{ratios} instead of \emph{differences}.
\end{remark}
\end{comment}

A typical choice of $\eta$, later used in the explicit computations of Sections~\ref{sec:2kasires}--\ref{section:numerical_quasires}, would be
\begin{equation*} %\label{eta:logR}
    \eta: (0,1)\to \R, \quad R\mapsto \log \left(\frac{R}{1-R} \right),
\end{equation*}
which is indeed an (increasing) $\mathcal{C}^\infty$-diffeomorphism from $(0,1)$ to $\R$. The distance from resonance measured through the quantity $\eta(R)-\eta(R')$ corresponds in this case to
$$\eta(R)-\eta(R') = \left(\log E_k' - \log E_i'\right) - \left(\log E_k-\log E_i \right)
 = \left(\log E_k' - \log E_k\right) - \left(\log E_i'-\log E_i \right).$$
That difference hence allows to simultaneously quantify the discrepancy between pre- and post-collisional energies, but also between kinetic and internal ones. Moreover, due to the presence of the $\log$ function, the comparison is made through ratios instead of differences.

\medskip

We are now in a position to define the family of quasi-resonant collision kernels $(B_{\e})_{\e > 0}$ associated to the reference kernel $B$ and the truncation family $(\chi_\e)_{\e >0}$, by, for any $\e > 0$, $v$, $v_* \in \R^3$, $\sigma \in \Sb^2$ and $I$, $I_*$, $I'$, $I'_* \in \R_+$,
\begin{equation} \label{e:defBepsi}
    B_{\e}(v,v_*,I,I_*,I',I'_*,\sigma) = B(v,v_*,I,I_*,I',I'_*,\sigma) \, \chi_{\e}(R,R'),
\end{equation}
where $R$ and $R'$ are given by \eqref{e:defRRprime}. Thanks to \eqref{e:symmetry_ceta}, for any $\e > 0$, the kernel $B_\e$ satisfies the symmetry and micro-reversibility properties, for any $v,v_*$, $\sigma$, $I, I_*,I',I'_*$,
%\begin{subequations}
\begin{align}
B_\e(v,v_*,I,I_*,I',I'_*,\sigma) &=  B_\e(v_*,v,I_*,I,I'_*,I',-\sigma),  \label{e:genBepssymm}\\
|v-v_*| B_\e(v,v_*,I,I_*,I',I'_*,\sigma) &= |v'-v'_*|B_\e(v',v'_*,I',I'_*,I,I_*,\sigma'), \label{e:genBepsrev}
\end{align}
%\end{subequations}
and thanks to \eqref{e:genB>0}, $B_\e$ satisfies the following positivity property, which differs from \eqref{e:genB>0} through the involvement of $\eta$ and $\e$,
\begin{equation} \label{e:positivity_Beps_quasires}
    \left\{I'+I'_* \leq E \quad \text{and} \quad |\eta(R) - \eta(R')| \leq \e \right\} ~\iff~ B_\e(v,v_*,I,I_*,I',I'_*,\sigma) > 0.
\end{equation}

%Let us now define the associated Boltzmann operator and state the $H$-theorem. 

\subsection{Quasi-resonant Boltzmann operator and \textit{\textbf{H}}-theorem} \label{ss:Boltzmann_quasires}

We consider a family of quasi-resonant collision kernels $(B_\e)_{\e > 0}$ defined as in the previous subsection, and denote by $(Q_\e)_{\e > 0}$ the associated family of Boltzmann quasi-resonant collision operators. For $\e > 0$, $Q_\e$ is defined for any density $f \equiv f(v,I) \geq 0$ for which it makes sense, and almost every $v \in \R^3$, $I \in \R_+$, by
\begin{multline} \label{e:quasires_boltzmann_op}
Q_\e(f,f)(v,I) \phantom{\int_{\R^3}}\\
=\int_{\R^3} \iiint_{(0,+\infty)^3} \int_{\mathbb{S}^2} \left(f' f'_* - f f_* \right) B_{\e}(v,v_*,I,I_*,I',I'_*,\sigma) \, \dd \sigma  \, \varphi(I') \, \dd I'  \, \varphi(I'_*) \, \dd I'_* \, \varphi(I_*) \, \dd I_* \, \dd v_*,
\end{multline}
where, in the above equation, we use the standard shortcuts $f \equiv f(v,I)$,  $f_* \equiv f(v_*,I_*)$,  $f' \equiv f(v',I')$,  $f_*' \equiv f(v'_*,I'_*)$, and $v'$, $v'_*$ are defined by \eqref{e:prepostvit}.

\subsubsection*{Weak form of the operator}
Since the kernel $B_{\e}$ satisfies the usual polyatomic symmetry and micro-reversibility conditions, we immediately obtain (see for instance \cite{borsoni2022general}) that, for any test-function $\psi \equiv \psi(v,I)$ such that the following makes sense, we have
\begin{multline} \label{eq:weakform}
\int_{\R^3} \int_0^{+\infty} \psi(v,I) \, Q_\e(f,f)(v,I) \, \varphi(I) \, \dd I \, \dd v  \\
= \frac12\iint_{(\R^3)^2} \iiiint_{(0,+\infty)^4} \int_{\mathbb{S}^2} f f_* \, \left(\psi' + \psi'_* - \psi - \psi_* \right) \phantom{\iint_{(\R^3)^2}} \phantom{\iint_{(\R^3)^2}} \\
\phantom{\iint_{(\R^3)^2}} B_{\e}(v,v_*,I,I_*,I',I'_*,\sigma) \, \dd \sigma  \, \varphi(I') \, \dd I'  \, \varphi(I'_*) \, \dd I'_* \, \varphi(I) \, \dd I  \, \varphi(I_*) \, \dd I_* \, \dd v \, \dd v_*,
\end{multline}
where we use the same shortcuts as in \eqref{e:quasires_boltzmann_op}. We deduce from its weak form \eqref{eq:weakform} that, as usual, $Q_\e$ conserves, at least formally, the mass, momentum and total energy. More precisely, for any $f \equiv f(v,I)$ such that it makes sense, the following property holds
\begin{equation} \label{e:conservationsQeps_quasires}
    \int_{\R^3} \int_0^{+\infty} Q_\e (f,f) (v,I) \begin{pmatrix}
        1 \\ v \\ \dfrac12 m |v|^2 + I
    \end{pmatrix} \, \varphi(I) \, \dd I  \,\dd v= 0.
\end{equation}
Notice that the conservation properties of the quasi-resonant operators are the same as those in the non-resonant polyatomic case. On the other hand, in the resonant case, kinetic and internal energy are conserved separately.

\subsubsection*{Collision invariants and $H$ theorem} 
We now focus on the basic properties of the quasi-resonant kernel.
\begin{definition}
A function $\psi : \R^3 \times \R_+ \to \R$ is said to be an $\e$-quasi-resonant collision invariant if $\psi$ is measurable and satisfies, for any $v$, $v_* \in \R^3$, $\sigma \in \Sb^2$ and $I$, $I_*$, $I'$, $I'_* \in \R_+$, 
\begin{equation} \label{eq:collision_inv_quasires}
\left(I'+I'_* \leq E \quad \text{and} \quad |\eta(R) - \eta(R')| \leq \e \right)  \quad \implies \quad \left(\psi(v',I') + \psi(v'_*,I'_*) = \psi(v,I) + \psi(v_*,I_*) \right),
\end{equation}
where $E$ is given by \eqref{e:defE}, $v'$, $v'_*$  by \eqref{e:prepostvit}, and $R$ and $R'$ by \eqref{e:defRRprime}.
\end{definition}

The definition of $\e$-quasi-resonant collision invariants slightly differs from the one of the usual polyatomic invariants through the additional condition $|\eta(R) - \eta(R')| \leq \e$.
%The definition of the $\e$-quasi-resonant collision invariants is slightly different from the one of the usual polyatomic ones, because of the added condition $|\eta(R) - \eta(R')| \leq \e$. 
Therefore, the set of collisions on which the invariance property is assumed to hold is smaller than in the standard polyatomic case. As a result, we cannot directly apply the known results of characterization of the collision invariants in the generic case. We provide this characterization in the following proposition, whose proof is mostly based on \cite[Lemma 3]{boudin:hal-03629556}, and provided in Appendix~\ref{a:collinv} for the sake of completeness.

\begin{proposition}[Collision invariants] \label{collision_invariants_quasires} 
Assume $\psi$ to be a continuous $\varepsilon$-quasi-resonant collision invariant. Then there exist $a_1 \in \R$, $p \in \R^3$ and $a_2 \in \R$ such that for all $(v,I) \in \R^3 \times \R_+$, we have
\begin{equation} \label{eq:characterization_invariant_quasires}
\psi(v,I) = a_1 + p \cdot v + a_2 \left(\frac{m}{2} |v|^2 + I \right).
\end{equation}
\end{proposition}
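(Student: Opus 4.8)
\emph{Plan.} The key structural observation is that, for each fixed $\e>0$, the quasi-resonant admissible set in \eqref{eq:collision_inv_quasires} is sandwiched between the resonant submanifold $\{R=R'\}$, which it contains, and the full set of standard polyatomic collisions, which it does not reach: it is a full-dimensional strip of width $\mathcal{O}(\e)$ around $\{R=R'\}$. I would therefore combine a known characterization on the resonant part with a short computation on a genuinely non-resonant part of the strip, the latter being exactly what forces a single total-energy coefficient.

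\emph{Step 1 (reduction to resonant invariants).} First I would note that any continuous $\e$-quasi-resonant collision invariant is, in particular, a resonant one. Indeed, when $R=R'$ one has $\eta(R)=\eta(R')$, hence $|\eta(R)-\eta(R')|=0\le\e$, so the implication in \eqref{eq:collision_inv_quasires} holds on the whole resonant submanifold, where momentum, kinetic energy and internal energy are separately conserved. Restricting the invariance to these resonant quadruplets and invoking the characterization of resonant invariants established along the lines of \cite[Lemma 3]{boudin:hal-03629556} (adapted to the internal energy measure $\mu$), I would obtain $a_1\in\R$, $p\in\R^3$ and \emph{a priori independent} coefficients $a_k,a_i\in\R$ with
\[ \psi(v,I) = a_1 + p\cdot v + a_k\,\tfrac{m}{2}|v|^2 + a_i\,I. \]

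\emph{Step 2 (merging the two energy coefficients).} This is the decisive quasi-resonant ingredient, and the one that genuinely uses non-resonant collisions. Since $\eta$ is a $\mathcal{C}^\infty$-diffeomorphism, for any $R'\in(0,1)$ the preimage $\eta^{-1}([\eta(R')-\e,\eta(R')+\e])$ is a nondegenerate subinterval of $(0,1)$ containing values $R\ne R'$; realizing such a pair $(R,R')$ by an admissible quadruplet with $I'+I'_*\le E$ yields a collision that is quasi-resonant but not resonant. Plugging the expression from Step 1 into \eqref{eq:collision_inv_quasires} and cancelling the affine part via momentum conservation \eqref{e:momconserv}, the invariance reduces to $a_k\Delta_k+a_i\Delta_i=0$, where $\Delta_k=\tfrac{m}{2}(|v'|^2+|v'_*|^2)-\tfrac{m}{2}(|v|^2+|v_*|^2)$ and $\Delta_i=(I'+I'_*)-(I+I_*)$ are the kinetic and internal energy jumps. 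By \eqref{e:defRRprime} one has $\Delta_k=E(R-R')=-\Delta_i$, so the relation becomes $E(R-R')(a_k-a_i)=0$; as $E>0$ and $R\ne R'$, this forces $a_k=a_i=:a_2$, which is exactly \eqref{eq:characterization_invariant_quasires}.

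\emph{Main obstacle.} I expect the genuine technical work to sit in Step 1: securing the resonant characterization under mere continuity (and with the measure $\mu$ rather than Lebesgue) amounts to solving a Cauchy-type functional equation, which is the part borrowed from \cite[Lemma 3]{boudin:hal-03629556}. By contrast Step 2 is an elementary linear identity, but it cannot be omitted: it is precisely what distinguishes the quasi-resonant invariants, carrying a single coefficient $a_2$ on the total energy $\tfrac{m}{2}|v|^2+I$, from the resonant ones, which admit independent coefficients on $|v|^2$ and $I$.
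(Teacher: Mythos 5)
Your proposal is correct and follows essentially the same route as the paper's own proof in Appendix~\ref{a:collinv}: restriction to the resonant submanifold (where $|\eta(R)-\eta(R')|=0\le\e$) to invoke the characterization of \cite[Lemma 3]{boudin:hal-03629556}, yielding independent coefficients on $\frac{m}{2}|v|^2$ and $I$, followed by evaluation on a quasi-resonant but non-resonant quadruplet to force the two coefficients to coincide. Your Step 2 identity $a_k\Delta_k+a_i\Delta_i=E(R-R')(a_k-a_i)=0$ is exactly the paper's relation $(a_2-a_3)(I'+I'_*-I-I_*)=0$, merely written in the $(R,R')$ variables.
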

The collision invariants \eqref{eq:characterization_invariant_quasires} are here the same as in the usual polyatomic case, rather than those of the resonant case. Having derived the weak form of the Boltzmann collision operator and characterized the collision invariants, we are now in a position to state the $H$ theorem associated with the quasi-resonant model.

\begin{comment}
\begin{proof}
As a collision invariant in the quasi-resonant case, $\psi$ satisfies
\begin{equation} \label{e:invarpsi}
\psi \left( \frac{v + v_*}{2} + \sqrt{\frac{E-I'-I_*'}{m}}\sigma , I' \right) + \psi \left( \frac{v + v_*}{2} - \sqrt{\frac{E-I'-I_*'}{m}}\sigma, I_*'\right) = \psi(v,I) + \psi(v_*,I_*)
\end{equation}
for any $(v,v_*,I,I_*,I',I_*',\sigma)$ such that $B_{\e}(v,v_*,I,I_*,I',I_*',\sigma)$ is non-zero. In particular, \eqref{e:invarpsi} holds whenever $R=R'$, using the notations defined in \eqref{e:defRRprime}, \ie~in the resonant case. Using \cite{boudin:hal-03629556}, there exist $a_1$, $a_2$, $a_3\in \R$ and $p \in \R^3$ such that, for any $(v,I)$, 
$$\psi(v,I) = a_1 + p \cdot v + a_2 \, \frac{m}{2} |v|^2 + a_3 I.$$
Replacing $\psi$ by its previous expression in \eqref{e:invarpsi}, we obtain
$$p \cdot (v'+v_*') + \frac{a_2}{2} \left( m|v'|^2 + m|v_*'|^2 \right) +a_3( I' + I_*') = p \cdot (v+v_*) + \frac{a_2}{2} \left( m|v|^2 + m|v_*|^2 \right) +a_3( I + I_*).$$
Thanks to the microscopic conservations \eqref{e:momconserv}--\eqref{e:totalenrgconserv} of momentum and total energy, we get
$$(a_2-a_3) (I' + I'_* - I - I_*) = 0.$$
Finally, we can choose $(v,v_*,I,I_*,I',I_*',\sigma)$ such that $B_{\e}(v,v_*,I,I_*,I',I_*',\sigma)\neq 0$ and $R\neq R'$ simultaneously, so that 
$I'+I'_* \neq I+I_*$, and subsequently obtain $a_2 = a_3$.
\end{proof}

\medskip
\end{comment}

\begin{theorem}[$H$ theorem] \label{theorem:H_quasi_res}
For any density $f \equiv f(v,I) > 0$, we have, at least formally, 
\begin{equation} \label{part1thH_quasires}
\int_{\R^3} \int_0^{+\infty} \log f(v,I) \, Q_\e(f,f)(v,I)  \, \dd v \,  \varphi(I) \, \dd I \leq 0.
\end{equation}
Moreover, the following two conditions are equivalent:
\begin{equation} \label{part2thH_quasires1}
\int_{\R^3} \int_0^{+\infty} \log f(v,I) \, Q_\e(f,f)(v,I)  \, \dd v  \, \varphi(I) \, \dd I = 0
\end{equation}
and 
\begin{equation} \label{part2thH_quasires2}
f(v,I) = \M(v,I) := \frac{\rho}{m} \, \left(\frac{2 \pi T}{m}\right)^{-3/2} \, Z(1/T)^{-1} \exp \left( -\frac{m |v-u|^2}{2T} - \frac{I}{T}\right),
\end{equation}
where 
$$Z(\beta) = \int_0^{+\infty}e^{-\beta I} \, \varphi(I) \, \dd I$$
is the internal partition function, $\rho$, $u$ and $T$ are respectively the mass density, average velocity and average temperature associated with $f$, \textnormal{i.e.} satisfy
\begin{equation*}
    \begin{pmatrix} \rho \\ \rho u \\ \dfrac{3 + \delta(T)}{2} \rho T +  \dfrac{1}{2} \rho |u|^2 \end{pmatrix} = \int_{\R^3} \int_0^{+\infty} f(v,I) \, \begin{pmatrix} 1 \\ v \\ \dfrac12 |v|^2 + I \end{pmatrix} \, \dd v \, \varphi(I) \, \dd I,
\end{equation*}
and where the number of internal degrees of freedom $\delta(T)$ is given by
\begin{equation*}
    \delta(T) = - \frac{2}{T} (\log Z)' \left(\frac{1}{T} \right).
\end{equation*}
\end{theorem}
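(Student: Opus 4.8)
The plan is to run the classical $H$-theorem argument, the only non-standard ingredient being that the equality case is closed by the quasi-resonant collision-invariant characterization of Proposition~\ref{collision_invariants_quasires} rather than by its full polyatomic analogue. First I would substitute the test function $\psi = \log f$ (legitimate, at least formally, since $f>0$) into the weak form \eqref{eq:weakform}. Writing $\dd\Gamma$ for the symmetric measure $\dd\sigma\,\varphi(I')\,\dd I'\,\varphi(I'_*)\,\dd I'_*\,\varphi(I)\,\dd I\,\varphi(I_*)\,\dd I_*\,\dd v\,\dd v_*$ on $(\R^3)^2\times(0,+\infty)^4\times\Sb^2$, the left-hand side of \eqref{part1thH_quasires} becomes
\[
\frac12 \int f f_* \, \log\!\frac{f'f'_*}{f f_*}\, B_\e \, \dd\Gamma .
\]

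Next I would symmetrize. The kernel $B_\e$ obeys the same symmetry \eqref{e:genBepssymm} and micro-reversibility \eqref{e:genBepsrev} as a standard polyatomic kernel, so the pre/post-collisional involution underlying \eqref{eq:weakform} applies verbatim: it sends $f f_* \mapsto f'f'_*$ and flips the sign of $\log(f'f'_*/(f f_*))$ while preserving $B_\e\,\dd\Gamma$ (the $|v-v_*|$ factor in \eqref{e:genBepsrev} accounting for the Jacobian). Averaging the displayed expression with its image under this involution turns the integrand into
\[
-\frac14 \int \left(f'f'_* - f f_*\right)\bigl(\log(f'f'_*) - \log(f f_*)\bigr)\, B_\e \, \dd\Gamma .
\]
Since $(a-b)(\log a - \log b)\ge 0$ for all $a,b>0$ and both $B_\e$ and $\dd\Gamma$ are nonnegative, the integrand is pointwise $\le 0$, which proves \eqref{part1thH_quasires}.

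For the equivalence, equality in \eqref{part2thH_quasires1} forces this nonnegative integrand to vanish; as $(a-b)(\log a - \log b)=0$ exactly when $a=b$, we get $f'f'_* = f f_*$, i.e. $\log f' + \log f'_* = \log f + \log f_*$, for almost every configuration with $B_\e>0$. By the positivity characterization \eqref{e:positivity_Beps_quasires}, this is exactly the set $\{I'+I'_*\le E,\ |\eta(R)-\eta(R')|\le\e\}$, and by continuity of $f$ the relation extends to all of it; hence $\log f$ is a continuous $\e$-quasi-resonant collision invariant. Proposition~\ref{collision_invariants_quasires} then yields \eqref{eq:characterization_invariant_quasires}, namely $\log f(v,I) = a_1 + p\cdot v + a_2(\tfrac{m}{2}|v|^2 + I)$. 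Integrability forces $a_2<0$; setting $T=-1/a_2>0$, completing the square in $v$ to read off $u$, and fixing $a_1$ by the mass normalization through the partition function $Z(1/T)$ gives precisely the form \eqref{part2thH_quasires2}, while evaluating the energy moment and using the definition of $Z$ recovers the stated $\delta(T)$. The converse is immediate: if $f=\M$ then $\log\M$ has the invariant form, so $\log f' + \log f'_* - \log f - \log f_*$ vanishes on the support of $B_\e$ and the integral is zero.

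Since the genuinely hard analytic step — restricting the invariance identity to the narrow quasi-resonant strip — is already delivered by Proposition~\ref{collision_invariants_quasires}, I expect the main obstacle to be essentially bookkeeping: correctly carrying the internal-energy weights $\varphi$ and the $|v-v_*|$ factor through the symmetrizing change of variables, and then matching the three constants $a_1,p,a_2$ to $\rho,u,T$ and extracting $\delta(T)$. The conceptually decisive point, which the computation merely confirms, is that the quasi-resonant invariants keep $\tfrac{m}{2}|v|^2 + I$ as a single combined invariant, so the Gaussian and the $e^{-I/T}$ factor are forced to share one common temperature $T$ — unlike the resonant case, where separate conservation of kinetic and internal energy would admit two distinct temperatures.
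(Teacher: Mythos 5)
Your proof is correct and takes essentially the same approach as the paper, which gives no detailed argument of its own: it states that the result is standard once the collision invariants are characterized, i.e.\ exactly your scheme of symmetrizing the weak form via micro-reversibility to get the sign, then closing the equality case by applying Proposition~\ref{collision_invariants_quasires} to $\log f$ and reading off the one-temperature Maxwellian. The only point worth flagging is the implicit regularity needed to invoke that proposition (continuity of $\log f$), which you acknowledge and which the paper likewise treats only formally.
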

Notice that, in the quasi-resonant case, the equilibrium distributions are, as in the usual polyatomic case, Maxwell functions with a single temperature. We recall that, in the resonant case, these distributions have two distinct temperatures (one kinetic and one internal).

The proof of Theorem~\ref{theorem:H_quasi_res} is straightforward once collision invariants are characterized (here, thanks to Proposition~\ref{collision_invariants_quasires}) and very standard, see for instance the proof of \cite[Theorem 4.1]{borsoni2022general}.

\subsection{Resonant asymptotics} \label{ssec:asymptoticsres_quasires}
In this subsection, we investigate the consistency of the quasi-resonant collision kernels with the corresponding resonant collision kernel in the vanishing $\e$ asymptotics. 
\begin{proposition} \label{p:mesphireson}
Consider $B$ a continuous reference collision kernel, hence satisfying \eqref{e:genBsymm}--\eqref{e:genB>0}.
Let $\e>0$, and $\psi := \psi(I',I'_*)$ be a continuous test-function with a compact support in $(0,+\infty)^2$. The tuple of variables $(v,v_*,I,I_*,\sigma) \in  (\R^3)^2 \times (0,+\infty)^2 \times \Sb^2$ being fixed, we set
\begin{equation*}
    \mathcal{J}_{\e} = \iint_{(0,+\infty)^2} B_{\e}(v,v_*,I,I_*,I',I'_*,\sigma) \, \psi(I',I'_*) \,\varphi(I')\,\varphi(I_*')\, \dd I' \,\dd I_*'.
\end{equation*}
Then $(\mathcal{J}_{\e})$ converges, as $\e$ goes to $0$, towards
\begin{equation} \label{e:valeurJ0}
    \mathcal{J}_0 = \int_0^{+\infty} B^\textnormal{res}(v,v_*,I,I_*,I',\sigma) \, \psi(I',I+I_* - I') \, \varphi(I') \, \varphi(I+I_* - I') \, \dd I',
\end{equation}
where the resonant collision kernel $B^\textnormal{res}$ associated to the reference kernel $B$ is defined by
\begin{equation} \label{eqdef:resonantkernelassociatedtoB_quasires}
    B^\textnormal{res}(v,v_*,I,I_*,I',\sigma) = B(v,v_*,I,I_*,I',I+I_* - I',\sigma) \, E \, \mathds{1}_{[I' \leq I+I_*]},
\end{equation}
\end{proposition}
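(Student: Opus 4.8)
The plan is to fix the tuple $(v,v_*,I,I_*,\sigma)$ once and for all, so that $E$ and $R'=1-(I+I_*)/E$ become constants with $R'\in(0,1)$, and to exploit the fact that the cut-off $\chi_\e(R,R')$ depends on the integration variables $(I',I'_*)$ only through the total post-collisional internal energy $s:=I'+I'_*$, via $R=1-s/E$. Everything else in the integrand (namely $B$, $\psi$ and the two factors $\varphi$) depends on $(I',I'_*)$ in a genuinely two-dimensional way, but is continuous. The strategy is therefore to first integrate out the ``diagonal'' variable, then to recognize $\chi_\e(\cdot,R')$ as an approximation of a Dirac mass concentrating at $R'$.

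First I would perform the change of variables $(I',I'_*)\mapsto(I',s)$ with $s=I'+I'_*$ (Jacobian $1$, domain $0<I'<s$), which gives
\[
\mathcal{J}_\e=\int_0^{+\infty}\chi_\e\big(R(s),R'\big)\,G(s)\,\dd s,\qquad R(s)=1-\frac{s}{E},
\]
where
\[
G(s)=\int_0^s B(v,v_*,I,I_*,I',s-I',\sigma)\,\psi(I',s-I')\,\varphi(I')\,\varphi(s-I')\,\dd I'.
\]
Since $B$, $\psi$ and $\varphi$ are continuous and $\psi$ has compact support in $(0,+\infty)^2$, the function $G$ is continuous on $(0,+\infty)$ and supported in a compact subinterval $[s_{\min},s_{\max}]\subset(0,+\infty)$. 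A further substitution $s=E(1-R)$, $\dd s=E\,\dd R$, then yields
\[
\mathcal{J}_\e=\int_0^1\chi_\e(R,R')\,\tilde G(R)\,\dd R,\qquad \tilde G(R):=E\,G\big(E(1-R)\big),
\]
with $\tilde G$ continuous on $(0,1)$ and vanishing near the endpoints. Evaluating the definition \eqref{eqdef:resonantkernelassociatedtoB_quasires} of $B^{\textnormal{res}}$ shows directly that $\mathcal{J}_0=E\,G(I+I_*)=\tilde G(R')$, so the proposition reduces to the single limit $\mathcal{J}_\e\to\tilde G(R')$.

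It then remains to prove that $\chi_\e(\cdot,R')$ acts as an approximate identity at $R'$, namely that $\int_0^1\chi_\e(R,R')\,g(R)\,\dd R\to g(R')$ for every bounded continuous $g$. I would establish this via the substitution $w=\eta(R)$, i.e. $R=\eta^{-1}(w)$, $\dd R=\dd w/\eta'(\eta^{-1}(w))$, which turns the indicator $\mathds{1}_{[|\eta(R)-\eta(R')|\le\e]}$ into the symmetric window $w\in[\eta(R')-\e,\eta(R')+\e]$ and gives
\[
\int_0^1\chi_\e(R,R')\,g(R)\,\dd R=\frac{1}{2\e}\int_{\eta(R')-\e}^{\eta(R')+\e}\frac{c_\eta(\eta^{-1}(w),R')}{\eta'(\eta^{-1}(w))}\,g(\eta^{-1}(w))\,\dd w.
\]
As $\e\to0$ the averaged integrand converges, by continuity of $c_\eta$, $\eta^{-1}$, $\eta'$ and $g$, to $c_\eta(R',R')\,g(R')/\eta'(R')$, and the diagonal condition \eqref{e:reson-c_eta}, $c_\eta(R',R')=\eta'(R')$, collapses this to $g(R')$. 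Applying this with $g=\tilde G$ concludes.

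I expect the only delicate points to be (i) checking that $\tilde G$ is genuinely continuous and compactly supported inside $(0,1)$, so that the boundary issues $R'\to\{0,1\}$ (the very reason $\eta$ is introduced) never arise for small $\e$, and (ii) making the approximate-identity limit rigorous — which is immediate once one notes that the normalization in \eqref{e:defchiepsi} together with \eqref{e:reson-c_eta} forces $\int_0^1\chi_\e(R,R')\,\dd R\to1$, so that uniform continuity of $\tilde G$ near $R'$ controls the error. The crux is precisely the interplay of the prefactor $c_\eta/(2\e)$ with the diagonal normalization $c_\eta(R',R')=\eta'(R')$, which is what produces the factor $E$ in $B^{\textnormal{res}}$ and the correct resonant measure.
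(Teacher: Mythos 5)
Your proof is correct and is essentially the paper's argument: both reduce the statement to a one-dimensional concentration property of $\chi_\e(\cdot,R')$ at $R=R'$, proved via the substitution $z=\eta(R)$ together with the diagonal condition $c_\eta(R',R')=\eta'(R')$ from \eqref{e:reson-c_eta}. The only difference is organizational: you apply Fubini first, integrating along the level sets $\{I'+I'_*=s\}$ to form the continuous compactly supported profile $G$ and then take a single 1D limit, whereas the paper passes to the Borgnakke--Larsen variables $(R,r)$, takes the limit at each fixed $r$, and concludes by dominated convergence in $r$ --- your ordering spares that final domination step but is otherwise the same computation.
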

Proposition~\ref{p:mesphireson} confirms that, from a modelling viewpoint, considering a small $\e$ in our approach indeed allows to describe the quasi-resonance phenomenon. More precisely, it is clear that $B$ will not be the corresponding resonant cross-section, and that it is not enough to impose $I_*'=I+I_*-I'$: the measure on $I'$ is also modified, and an additional term $E$ rises. Hence, in \eqref{e:valeurJ0}, we obtain the resonant integration measure $\varphi(I') \, \varphi(I+I_* - I') \, \dd I'$ used in the corrected version \cite{borsonicompactcorrect} of \cite{borsonicompact}. Moreover, $B^\textnormal{res}$ still complies with the symmetry and micro-reversibility properties. In fact, the characteristic function can be omitted, as the argument $I+I_* - I'$ of $B$ ensures nonnegativity whenever $B$ is non-zero.
\begin{proof}
Since $v$, $v_*$, $I$, $I_*$ and $\sigma$ are fixed, we set, in the following, for the sake of clarity,
\begin{equation*}
    \Psi(I',I'_*) = B(v,v_*,I,I_*,I',I'_*,\sigma) \, \psi(I',I'_*) \, \varphi(I') \, \varphi(I'_*).
\end{equation*}
Using \eqref{e:defchiepsi}--\eqref{e:defBepsi}, we get
$$\mathcal{J}_{\e} = \frac 1{2\e} \iint_{(0,+\infty)^2}c_\eta(R,R')\, \mathds{1}_{\left[|\eta(R) - \eta(R')| \le \e\right]}(R,R') \,  \Psi(I',I'_*) \,  \dd I' \, \dd I'_*.$$
Remembering that $R'$ only depends on some of the fixed variables, and not on $I'$ or $I_*'$, a convenient way to proceed is to use the Borgnakke-Larsen representation, that is to change variables $(I',I'_*)$ into $(R,r)$ with 
\begin{equation*} %\label{e:ChVBL}
R= 1-\frac{I'+I'_*}{E}, \qquad r=\frac{I'}{I' + I'_*}, 
\end{equation*}
with Jacobian $(1-R) E^2$. Then we have
\begin{equation} \label{e:JepsiRr}
    \mathcal{J}_{\e} = \frac 1{2\e} \iint_{(0,1)^2} c_\eta(R,R')\, \mathds{1}_{\left[|\eta(R) - \eta(R')| \le\e\right]}(R,R') \,  \Psi(r(1-R)E,(1-r)(1-R)E) \, (1-R)E^2\, \dd R \, \dd r. 
\end{equation}
Consider both $r$ and $E$ to be fixed and denote, for any $R$, 
$$\bar{\Psi}(R) = \Psi(r(1-R)E,(1-r)(1-R)E) \; (1-R)E^2,$$
which is clearly continuous by assumption. We now focus, in \eqref{e:JepsiRr}, on the integral in $R$, that is
$$\frac1{2 \e}\int_0^1 c_\eta(R,R') \, \mathds{1}_{\left[|\eta(R) - \eta(R')| \le \e\right]}(R,R') \, \bar{\Psi}(R) \, \dd R.$$
We perform the change of variable $z = \eta(R)$, so that it becomes
\begin{multline} \label{e:limitPsiepsi}
\frac{1}{2 \e} \int_{\R} c_\eta(\eta^{-1}(z),R')  \, \mathds{1}_{\left[|z - \eta(R')| \le \e\right](\eta^{-1}(z),R')} \, \bar{\Psi}(\eta^{-1}(z)) \, (\eta^{-1})'(z) \, \dd z \\
= \frac{1}{2 \e} \int_{\eta(R')-\e}^{\eta(R') + \e} c_\eta(\eta^{-1}(z),R') \, \bar{\Psi}(\eta^{-1}(z)) \, (\eta^{-1})'(z) \, \dd z.
\end{multline}
Letting $\e$ go to $0$ in \eqref{e:limitPsiepsi} and using \eqref{e:reson-c_eta}, we get, at the limit,  
$$c_\eta(R',R') \, \bar{\Psi}(R') \, (\eta^{-1})'(\eta(R')) = \frac{c_\eta(R',R') \,\bar{\Psi}(R')}{\eta'(R')} = \bar{\Psi}(R').$$
Hence, by dominated convergence, $(\mathcal{J}_{\e})$ converges, when $\varepsilon$ goes to $0$, towards
$$\int_0^1 \Psi(r(1-R') E,(1-r)(1-R') E) \, (1-R') E^2\, \dd r,$$
which we denote by $\mathcal{J}_0$. Remembering that $(1-R') E = I + I_*$, we finally perform the change of variable $r$ to $I'$ with  $I' = r (I+I_*)$, leading to
\begin{equation*}
\mathcal{J}_0 = \int_0^{I+I_*} \Psi(I',I+I_* - I') \,E \, \dd I',
\end{equation*}
and then to \eqref{e:valeurJ0}.
\end{proof}

\section{Derivation of Landau--Teller-type equations} \label{sec:2kasires}

In this section, we discuss what we can deduce from the $H$ theorem (Theorem~\ref{theorem:H_quasi_res}) and the resonant asymptotics (Proposition~\ref{ssec:asymptoticsres_quasires}). We consider from now on the \textit{homogeneous quasi-resonant Boltzmann equation}, for some $\e>0$,
\begin{equation} \label{eq:homogeneousBoltzquasires}
    \partial_t f(t,v,I) = Q_\e (f(t),f(t))(v,I), \qquad \qquad f(0,v,I) = f_\textnormal{in}(v,I),
\end{equation}
for $t \geq 0$, $v \in \R^3$ and $I \in \R_+$.

\subsection*{Expected properties of the quasi-resonant dynamics}
Let us first informally discuss the properties of the quasi-resonant model we may expect, based on the results proven earlier. The $H$ theorem ensures that the equilibrium distributions of the {quasi-resonant} dynamics have a Maxwellian form, with one temperature. On the other hand, for the resonant asymptotics, Proposition~\ref{ssec:asymptoticsres_quasires} ensures that, at least formally, when $\e$ is small, the quasi-resonant dynamics should behave almost like the associated resonant ones. But we also recall that the equilibrium distributions of the resonant dynamics are Maxwell functions with two temperatures.

\begin{figure}[ht]
    \centering
    \begin{tikzpicture}[scale=1]
  
\draw[fill=black!10,draw=none] (0,0) rectangle (5,2);
\draw[fill=blue!10!red!10,draw=none] (5,0) rectangle (12,2);

\draw[->,thick] (0,0) -- (12,0);

\draw (12,0) node[below]{time};

\draw[dashed, thin] (5,0) -- (5,2);

\draw  (2.5,1.25) node{convergence towards a};
\draw  (2.5,.75) node{two-temperature Maxwellian};

\draw (8.5,1.25) node{relaxation of the two temperatures};
\draw (8.5,.75) node{towards each other};

\draw  (2.5,-.3) node{short time};

\draw  (8.5,-.3) node{long time};

\end{tikzpicture}
    \caption{Expected behaviour of the quasi-resonant dynamics.}
    \label{fig:rep_quasires_dynamic}
\end{figure}
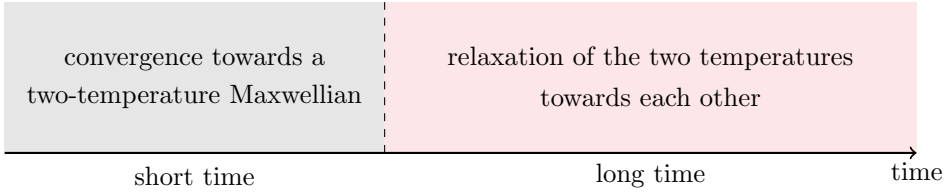

Therefore, we expect the quasi-resonant dynamics to be composed of two phases, as schematically illustrated in Figure~\ref{fig:rep_quasires_dynamic}. In the short term, the quasi-resonant dynamics closely resembles its associated resonant dynamics, leading to a relaxation towards a two-temperature Maxwellian. Over longer time scales, since the distribution remains close to this form, the two temperatures should gradually converge towards each other. Moreover, we anticipate that the distribution will retain the structure of a two-temperature Maxwellian, as the resonant component of the quasi-resonant collision operator dominates its mixing component.
%First, in short time, the quasi-resonant dynamics behaves almost like its associated resonant dynamics. We should observe a relaxation towards a two-temperature Maxwellian. Second, in long time, since the distribution is close to a two-temperature Maxwellian, both temperatures should converge to each other. Moreover, we expect the distribution to stay in the form of a two-temperature Maxwellian, as the resonant part of the quasi-resonant collision operator is much stronger than its mixing part.

%We expect, during the quasi-resonant dynamics \eqref{eq:homogeneousBoltzquasires}, that if the initial distribution is approximately of the form of a two-temperature Maxwellian, it approximately remains in this form. This is not the case in the generic polyatomic case, 
This behaviour differs from the dynamics of the non-resonant polyatomic case and is a specific feature of the present model. In particular, we deduce a pair of coupled ordinary differential equations on both kinetic and internal temperatures, of Landau--Teller type, see for instance \cite{nikitin200870}.

%\medskip

%\noindent In Section~\ref{section:numerical_quasires}, we provide numerical results supporting the above statements.

%\subsection{Derivation of Landau--Teller type equations} \label{ssec:landauteller_quasires}

Let us now obtain explicit formulas, at the lowest non-zero order of approximation in $\e$, for the aforementioned two temperatures in the second, long-time, part of the dynamics. To this end, we derive ODEs of Landau--Teller type on the kinetic and internal temperatures, which are locally satisfied at time $t=0$ by the solutions to the quasi-resonant dynamics \eqref{eq:homogeneousBoltzquasires} with a two-temperature Maxwellian initial distribution. The heuristic discussed above on the two phases of the dynamics allows to see those ODEs as good candidates for actually computing the two temperatures at all times: this is later highlighted by numerical experiments presented in Section~\ref{section:numerical_quasires}.

\subsection*{Computational choices}

Before stating the main result of this section, we first recall the expressions we use in the computations below. We emphasize once again that these expressions were not needed in the previous section. First, the energy law density $\varphi$ is chosen as
\begin{equation} \label{choix:varphi_quasires}
\varphi(I)=I^{\delta/2-1}, \qquad I \geq 0,
\end{equation}
with $\delta \geq 2$. 
This choice of energy law density corresponds to a polytropic gas with a number of internal degrees of freedom equal to $\delta$, and is common in the literature, see \cite{alonso2024cauchy,MR1277241}.

Second, the reference collision kernel is set to 
\begin{equation} \label{eq:refkernel_computations}
B(v,v_*,I,I_*,I',I'_*,\sigma) = C_B \; b(\sigma \cdot \sigma') \; E_k^{\kappa_k-\frac12}  \;{E_k'}^{\kappa_k} \; E_i^{\kappa_i}  \; {E_i'}^{\kappa_i} \; E^{\gamma} \; \frac{\mathds{1}_{[I'+I'_* \leq E]} }{\mathfrak{m}_\varphi (E)}, 
\end{equation}
where we use the shortcut notations
$$E_k =\frac{m}{4}|v-v_*|^2, \qquad E_i=I+I_*, \qquad E_k'=\frac{m}{4}|v'-v'_*|^2, \qquad E_i' = I'+I'_*, \qquad E = E_k + E_i = E_k' + E_i',$$ 
$v'$ and $v'_*$ are given by \eqref{e:prepostvit}, $\kappa_k$, $\kappa_i$ and $\gamma$ are real numbers, $C_B$ is a positive constant, and $b$ is an angular kernel satisfying, for $\omega\in\Sb^2$,
\begin{equation} \label{eq:angular_quasires}
\int_{\Sb^2} b(\sigma \cdot \omega) \, \dd \sigma = 1.
\end{equation}
Eventually, taking \eqref{choix:varphi_quasires} into account in \eqref{e:gfunction}, and thanks to Lemma~\ref{l:exprmmphi}, we have
\begin{equation} \label{mvarphi_explicit_quasires} 
\mathfrak{m}_\varphi (E) = \iint_{(0,+\infty)^2} \, \mathds{1}_{[I'+I'_* \leq E]} \, \varphi(I')\, \varphi(I_*')\, \dd I' \, \dd I_*' =\frac{\Gamma(\delta/2)^2}{\Gamma(\delta+1)} \; E^{\delta},
\end{equation}
where $\Gamma$ stands for the Gamma function.

Finally, we choose $\eta$ as
\begin{equation} \label{eta:logR}
    \eta(R) = \log \left(\frac{R}{1-R} \right), \qquad R \in (0,1).
\end{equation}
Let $\e>0$. That function $\eta$ allows to define the truncation function $\chi_\e$ through \eqref{e:defchiepsi}--\eqref{e:reson-c_eta} (with no need to specify $c_\eta$) and subsequently $B_\e$ as 
\begin{equation} \label{e:B_esp_B_ref}
B_{\e}(v,v_*,I,I_*,I',I'_*,\sigma) = B(v,v_*,I,I_*,I',I'_*,\sigma) \, \chi_{\e}(R,R').
\end{equation}

%%%%%%%%%%%%%%%%%%%%%%%%%%%%
\begin{comment}
\noindent \medskip \textbf{Family of quasi-resonant kernels.} We finally define the family of quasi-resonant kernels, $(B_\e)_{\e>0}$, by, as in~\eqref{e:defBepsi}, for any $\e > 0$, $v,v_* \in \R^3$, $\sigma \in \Sb^2$ and $I,I_*,I',I'_* \in \R_+$,
and we refer to~\eqref{e:defRRprime}--\eqref{e:reson-c_eta} for the definition of $\chi_{\e}(R,R')$.
\end{comment}
%%%%%%%%%%%%%%%

\subsection*{Local Landau--Teller equations} We can now provide, in the next proposition, the first term in the expansion in $\e$ of the derivative at time $t=0$ of the internal temperature of a solution to the homogeneous quasi-resonant Boltzmann equation~\eqref{eq:homogeneousBoltzquasires} with a two-temperature Maxwellian initial condition.
\begin{proposition} \label{prop:Landau--Teller_quasires}
For $\e > 0$, consider $f$ a solution to the homogeneous quasi-resonant Boltzmann equation~\eqref{eq:homogeneousBoltzquasires} associated to $\varphi$, $\eta$ and $B_\e$ defined thanks to \eqref{choix:varphi_quasires}--\eqref{e:B_esp_B_ref}, with $\gamma = \delta + 1$. Assume the initial condition $f_\textnormal{in}$ to be a \textnormal{two-temperature} Maxwellian distribution associated to the energy law $\varphi$, the mass density $\rho > 0$, the average velocity $u \in \R^3$ and the kinetic and internal temperatures $T_k^0 > 0$ and $T_i^0 > 0$. For $t \geq 0$, we set
\begin{equation} \label{eq:kinetic_and_internal_temp_quasires}
    T_k(t) = \frac{m}{6 \rho} \iint_{\R^3 \times (0,+\infty)}\!\! |v-u|^2 \, f(t,v,I) \, \dd v \, \varphi(I) \, \dd I, \quad T_i(t) = \frac{1}{\delta \rho} \iint_{\R^3 \times (0,+\infty)} \!\! I \, f(t,v,I) \, \dd v \, \varphi(I) \, \dd I,
\end{equation}
\textnormal{i.e.} the respective kinetic and internal temperatures of $f$ at time $t$. Then 
\begin{equation*}
    \left. \frac{\dd T_i}{\dd t} \right|_{t=0} =   \e^2 \; \rho \,  C_{\delta,\kappa_k,\kappa_i} \left(T_k^0\right)^{2 \kappa_k + \frac12} \; \left(T_i^0\right)^{2 \kappa_i + \delta} \; \left(T_k^0 - T_i^0\right) +  o(\e^2),
\end{equation*} 
with
\begin{equation*}
C_{\delta,\kappa_k,\kappa_i} =  \frac{C_B}{12} \times \frac{ \Gamma \left(2 \kappa_k + 3 \right) \; \Gamma \left(2 \kappa_i + 2 \delta + 1 \right)}{\Gamma(3/2) \; \Gamma(\delta) }.
\end{equation*}
\end{proposition}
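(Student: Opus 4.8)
The plan is to compute $\left.\frac{\dd T_i}{\dd t}\right|_{t=0} = \frac{1}{\delta\rho}\iint I\, Q_\e(f_\textnormal{in},f_\textnormal{in})\,\varphi(I)\,\dd I\,\dd v$ by feeding $\psi(v,I)=I$ into the weak form \eqref{eq:weakform}, and then symmetrising. Using the symmetry \eqref{e:genBepssymm} and the micro-reversibility \eqref{e:genBepsrev} of $B_\e$ through the pre/post-collisional exchange $(v,v_*,I,I_*)\leftrightarrow(v',v_*',I',I_*')$, the weak form with $\psi=I$ can be recast as
$$\left.\frac{\dd T_i}{\dd t}\right|_{t=0} = \frac{1}{4\delta\rho}\int \big(f_\textnormal{in} f_{\textnormal{in},*}-f_\textnormal{in}'f_{\textnormal{in},*}'\big)\,(I'+I_*'-I-I_*)\,B_\e\;\dd\lambda,$$
where $\dd\lambda$ denotes the full symmetric measure appearing in \eqref{eq:weakform}. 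Writing $I'+I_*'-I-I_* = E_i'-E_i = (R'-R)E$ shows that the integrand carries a factor vanishing on the resonant manifold $\{R=R'\}$, which is the structural reason why the leading contribution is $O(\e^2)$ rather than $O(1)$ or $O(\e)$.

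Next I insert the two-temperature Maxwellian. Since the centre of mass $G=(v+v_*)/2$ is collisionally conserved, $f_\textnormal{in} f_{\textnormal{in},*}$ and $f_\textnormal{in}'f_{\textnormal{in},*}'$ share the same Gaussian factor $\exp(-m|G-u|^2/T_k^0)$, while the remaining weights are $\exp(-E(R'/T_k^0+(1-R')/T_i^0))$ and its primed analogue with $R'\to R$. On the support of $\chi_\e$ one has $R'-R=O(\e)$, so a first-order expansion yields
$$f_\textnormal{in} f_{\textnormal{in},*}-f_\textnormal{in}'f_{\textnormal{in},*}' = (\text{common weight})\,E\,(R'-R)\,\frac{T_k^0-T_i^0}{T_k^0 T_i^0} + o(\e).$$
Multiplying by $(R'-R)E$ produces the crucial double zero $(R'-R)^2$ together with the factor $T_k^0-T_i^0$ (with the physically correct sign); in particular the right-hand side vanishes when $T_k^0=T_i^0$, consistent with the single-temperature Maxwellian being stationary.

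The constant is then extracted by a change of variables, exactly in the spirit of Proposition~\ref{p:mesphireson}. I integrate out $G$ (a Gaussian), and the pair $(\sigma,\sigma')$ with $\sigma'=(v-v_*)/|v-v_*|$, using $\int_{\Sb^2} b\,\dd\sigma=1$ and $\int_{\Sb^2}\dd\sigma'=4\pi$. A double Borgnakke--Larsen change sends $(E_k,I,I_*,I',I_*')$ to $(E,R',r_*,R,r)$, with $r_*=I/(I+I_*)$, $r=I'/(I'+I_*')$, Jacobian $(1-R)(1-R')E^4$, and kinetic spherical factor $E_k^{1/2}=(R'E)^{1/2}$; all factors then separate. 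The $\e$-dependence sits entirely in the $R$-integral: substituting $z=\eta(R)$ as in \eqref{e:limitPsiepsi} and using that the integrand vanishes to \emph{second} order at $z=\eta(R')$, one obtains
$$\frac{1}{2\e}\int_{\eta(R')-\e}^{\eta(R')+\e}\!\!(R'-\eta^{-1}(z))^2\,g(\eta^{-1}(z))\,c_\eta(\eta^{-1}(z),R')\,(\eta^{-1})'(z)\,\dd z = \frac{\e^2}{3}\,\big(R'(1-R')\big)^2\,g(R') + o(\e^2),$$
where $g(R)=R^{\kappa_k}(1-R)^{\kappa_i+\delta-1}$ collects the smooth $R$-dependence; here only the diagonal value $c_\eta(R',R')=\eta'(R')$ from \eqref{e:reson-c_eta} enters, which is precisely why the answer is independent of the particular choice of $c_\eta$. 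The $r$- and $r_*$-integrals are Beta integrals, each equal to $\Gamma(\delta/2)^2/\Gamma(\delta)$, the $E$-integral is a Gamma integral against $\beta(R')=R'/T_k^0+(1-R')/T_i^0$, and the genuinely delicate $R'$-integral I would evaluate using
$$\int_0^1 \frac{x^{s-1}(1-x)^{t-1}}{\big(p\,x+q\,(1-x)\big)^{s+t}}\,\dd x = \frac{\Gamma(s)\,\Gamma(t)}{\Gamma(s+t)}\,\frac{1}{p^{s}\,q^{t}}, \qquad p,q>0,$$
applied with $s=2\kappa_k+3$, $t=2\kappa_i+2\delta+1$, $p=T_i^0$, $q=T_k^0$. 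This collapses the $R'$-integral to a clean product of powers of $T_k^0$ and $T_i^0$ and cancels the large factor $\Gamma(2\kappa_k+2\kappa_i+2\delta+4)$; collecting the normalisation of $f_\textnormal{in}$, the measure factors, and using $\Gamma(\delta+1)=\delta\Gamma(\delta)$ together with $\sqrt\pi=2\Gamma(3/2)$, the Gamma and numerical constants reorganise into $C_{\delta,\kappa_k,\kappa_i}$ while the $T$-powers assemble to $(T_k^0)^{2\kappa_k+1/2}(T_i^0)^{2\kappa_i+\delta}$.

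I expect two main obstacles. The first is rigorously justifying the interchange of the $\e\to0$ asymptotics with the non-compact integrations over $E$ and the velocity variables; this should follow from a dominated-convergence argument analogous to the one used in Proposition~\ref{p:mesphireson}, exploiting the Gaussian/algebraic decay of the Maxwellian against the polynomial growth of the reference kernel \eqref{eq:refkernel_computations} (note $\gamma=\delta+1$ is fixed). The second is purely bookkeeping: keeping the numerous normalisation, Jacobian and $m$-dependent constants under control so that they consolidate exactly into $C_{\delta,\kappa_k,\kappa_i}$, with the Beta-type identity above being the key device that renders the final $R'$-integral explicit.
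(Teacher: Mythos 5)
Your proposal is correct, and I checked its quantitative backbone: the Jacobian $(1-R)(1-R')E^4$ of the double Borgnakke--Larsen change, the asymptotics $\frac{\e^2}{3}\bigl(R'(1-R')\bigr)^2 g(R')$ of the $R$-integral (a second-order Taylor expansion of a function with a double zero at $z=\eta(R')$, using only $c_\eta(R',R')=\eta'(R')$), the exponents $s=2\kappa_k+3$, $t=2\kappa_i+2\delta+1$ whose sum matches the Gamma factor produced by the $E$-integral, and the final temperature powers $(T_k^0)^{2\kappa_k+1/2}(T_i^0)^{2\kappa_i+\delta}$ after absorbing the Maxwellian normalisations. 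However, your route is genuinely different from the paper's. The paper never symmetrises: it keeps the weak form \eqref{eq:weakform} with $f_\textnormal{in}(f_\textnormal{in})_*$ unexpanded, performs the inner $(I',I'_*,\sigma)$-integral at fixed $(v,v_*,I,I_*)$, and obtains the $\e^2$ factor purely from the near-symmetric first moment of the cutoff, $\int(R-R_0)\psi(R)\chi_\e(R,R_0)\,\dd R=\mathcal{O}(\e^2)$ (Lemma~\ref{lemma:developpementlimite}, which requires both \eqref{e:reson-c_eta} and the derivative identity \eqref{e:deriv-c_eta} coming from the symmetry of $c_\eta$); the Maxwellian form of $f_\textnormal{in}$ then enters only at the very end, through factorized moments $\mathbf{m}_k^\beta(T_k^0)\,\mathbf{m}_i^\beta(T_i^0)$ (Lemma~\ref{lemma:moments}), so no coupled $R'$-integral with a mixed-temperature denominator ever appears, and the factor $T_k^0-T_i^0$ emerges from combining Gamma functions in the difference $(\kappa_i+\delta)E_k-(\kappa_k+1)E_i$. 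You instead symmetrise to $\frac14\int(ff_*-f'f'_*)(I'+I'_*-I-I_*)B_\e$, so the $\e^2$ arises as a double zero — one factor $(R'-R)$ from the energy exchange, one from expanding the Maxwellian difference — which makes the sign and the vanishing at $T_k^0=T_i^0$ transparent from the outset, and makes the leading order insensitive to the choice of $c_\eta$ beyond its diagonal value (the symmetry \eqref{e:symmetry_ceta} is still needed, but only to justify micro-reversibility \eqref{e:genBepsrev} and hence the symmetrisation itself). The price you pay is that your intermediate expansion is tied to the Maxwellian form of $f_\textnormal{in}$ from the start, and that integrating $E$ before $R'$ couples the two temperatures through $\beta(R')=R'/T_k^0+(1-R')/T_i^0$, forcing the non-obvious Beta-type identity to close the computation — whereas the paper's ordering keeps the kinetic and internal integrations factorized throughout. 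Both arguments sit at the same level of rigour concerning the interchange of the $\e$-expansion with the unbounded integrations, which you correctly flag as needing a domination argument.
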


\begin{comment}
\color{orange}
\begin{equation}
    \left. \frac{\dd T_i(t)}{\dd t} \right|_{t=0} =   \e^2 \; \rho \; C_{\delta,\beta_k,\beta_i} \; \;  (T_k^0)^{2 \beta_k + \frac32} \; \;  (T_i^0)^{2 \beta_i + \delta + 1} \; \; (T_k^0 - T_i^0) \; \; + \; \; o(\e^2),
\end{equation} 
with, recalling that $\Gamma$ stands for the Gamma function,
\begin{equation}
C_{\delta,\beta_k,\beta_i} =  \frac{C_B}{12} \times \frac{ \Gamma \left(2 \beta_k + 4 \right) \; \Gamma \left(2 \beta_i + 2 \delta + 2 \right)}{\Gamma(\frac32) \; \Gamma(\delta) }.
\end{equation}
\end{proposition}
\end{comment}

\begin{proof}
Since $f$ solves \eqref{eq:homogeneousBoltzquasires}, we have
\begin{equation} \label{e:triangle} %% ne cherchez pas a comprendre pourquoi ce label...
\left. \frac{\dd T_i}{\dd t} \right|_{t=0} = \frac{1}{\delta \rho} \iint_{\R^3 \times (0,+\infty)} I  Q_{\e}(f_\textnormal{in}, \,f_\textnormal{in})(v,I) \, \varphi(I) \, \dd I \, \dd v.
\end{equation}
Hence, we need to compute the right-hand side of \eqref{e:triangle}. It comes from the weak form \eqref{eq:weakform} of the operator $Q_\e$ that
\begin{multline*}
\frac{1}{\delta \rho} \iint_{\R^3 \times (0,+\infty)} I Q_{\e}(f_\textnormal{in}, \,f_\textnormal{in})(v,I) \, \varphi(I) \, \dd I\, \dd v \\
=\frac{1}{2 \delta \rho} \iint_{(\R^3)^2} \iiiint_{(0,+\infty)^4} \int_{\mathbb{S}^2}  f_\textnormal{in} \,(f_\textnormal{in})_* \, (I'+I'_* - I - I_*) \phantom{\iint_{(\R^3)^2}} \\ \phantom{\iint_{(\R^3)^2}} B_\e(v,v_*,I,I_*,I',I'_*,\sigma)\,\dd \sigma \,  \varphi(I) \, \varphi(I_*) \, \varphi(I') \, \varphi(I'_*)  \,  \dd I  \, \dd I_*  \, \dd I'  \, \dd I'_*  \, \dd v \, \dd v_*.
\end{multline*}
The latter integral can be rewritten as
\begin{multline} \label{eqprooflandauteller_quasires}
    \frac{1}{2\delta \rho} \iint_{(\R^3)^2} \iint_{(0,+\infty)^2}  f_\textnormal{in} \,(f_\textnormal{in})_* \\
    \left[\iint_{(0,+\infty)^2} \int_{\mathbb{S}^2} (I'+I'_* - I - I_*)  \, B_{\e}(v,v_*,I,I_*,I',I'_*,\sigma)\, \varphi(I') \, \varphi(I'_*)  \,  \dd \sigma \, \dd I'  \, \dd I'_* \right] \\
 \phantom{\iint_{(\R^3)^2}}\varphi(I) \, \varphi(I_*)  \,  \dd I  \, \dd I_*  \, \dd v \, \dd v_*.
\end{multline}
We now focus on the term between brackets in \eqref{eqprooflandauteller_quasires}. Using the explicit form \eqref{eq:refkernel_computations} of the reference collision kernel $B$ as well as its angular property \eqref{eq:angular_quasires}, the term becomes 
\begin{equation*}
C_B\frac{E_k^{\kappa_k - 1/2} E_i^{\kappa_i} E^{\gamma}}{\mathfrak{m}_\varphi (E)}\iint_{(0,+\infty)^2} (E_i'-E_i) \, (E-E_i')^{\kappa_k}  \, {E_i'}^{\kappa_i} \, \mathds{1}_{[E_i' \leq E]} \,  \chi_{\e}\left(1-\frac{E_i'}{E},1-\frac{E_i}{E}\right) \, \varphi(I')\, \dd I'  \, \varphi(I'_*) \,\dd I'_*.
\end{equation*}
%with the same notations as in~\eqref{e:B_esp_B_ref}. 
By definition \eqref{e:gfunction} of $\mathfrak{m}_{\varphi}$, substituting $E_i' = I'+I'_*$ as an integration variable, the above term reads
\begin{equation*}
C_B\frac{E_k^{\kappa_k - 1/2}E_i^{\kappa_i} E^{\gamma}}{\mathfrak{m}_\varphi (E)}\int_{0}^E (E_i'-E_i) \, (E-E_i')^{\kappa_k}  \, {E_i'}^{\kappa_i} \, \chi_{\e}\left(1-\frac{E_i'}{E},1-\frac{E_i}{E}\right) \, \mathfrak{m}'_\varphi (E_i') \, \dd E_i'.
\end{equation*}
Then, using the explicit form \eqref{mvarphi_explicit_quasires} of $\mathfrak{m}_{\varphi}$, performing the change of variable $R = 1 - E_i'/E$ and setting $R_0 = 1 - E_i/E$, we transform the integral into
\begin{equation} \label{e:carre}
C_B \delta\,  E_k^{\kappa_k - 1/2} E_i^{\kappa_i} E^{\gamma + \kappa_k + \kappa_i + 1}\int_{0}^1 (R_0-R) \, R^{\kappa_k}  \; (1-R)^{\kappa_i + \delta - 1} \, \chi_{\e}\left(R,R_0\right) \, \dd R.
\end{equation}
Applying Lemma~\ref{lemma:developpementlimite} to $\psi(R) = - R^{\kappa_k} (1-R)^{\kappa_i + \delta - 1}$ yields the expansion of \eqref{e:carre} with respect to $\e$ around $0$ as
\begin{equation*}
\e^2 \, \frac{C_B (\delta + 1)\,  E_k^{\kappa_k - 1/2} E_i^{\kappa_i} E^{\gamma + \kappa_k + \kappa_i + 1}\,{R_0}^{\kappa_k} (1-R_0)^{\kappa_i + \delta - 1}}{3 \eta'(R_0)^2}\left[ \left(\log \eta' \right)'(R_0) - \frac{\kappa_k}{R_0} + \frac{\kappa_i + \delta - 1}{1 - R_0} \right] + o(\e^2).   
\end{equation*}
Now, since we chose $\eta(R_0) = \log \frac{R_0}{1-R_0}$, we have $\left(\log \eta' \right)'(R_0) = -\frac{1}{R_0} + \frac{1}{1-R_0}$ and $\frac{1}{\eta'(R_0)^2} = R_0^2(1-R_0)^2$, the expression actually equals
\begin{equation*}
\e^2 \, \frac{C_B\delta}{3}  \, E_k^{\kappa_k - 1/2} E_i^{\kappa_i} E^{\gamma + \kappa_k + \kappa_i + 1}\,R_0^{\kappa_k + 2} (1-R_0)^{\kappa_i + \delta + 1} \left(- \frac{\kappa_k + 1}{R_0} + \frac{\kappa_i + \delta}{1 - R_0} \right) + o(\e^2).   
\end{equation*}
As $R_0 E = E_k$ and $(1-R_0)E = E_i$, the previous term becomes
\begin{equation*}
\e^2 \, \frac{C_B\delta}{3}  \, E_k^{2\kappa_k + \frac12} E_i^{2\kappa_i + \delta} E^{\gamma -\delta-1} \left[(\kappa_i + \delta)E_k - (\kappa_k + 1)E_i \right] + o(\e^2).   
\end{equation*}
Finally, recalling that $\gamma = \delta + 1$, the bracket term in \eqref{eqprooflandauteller_quasires} reads
\begin{equation}
\e^2 \, \frac{C_B\delta}{3} \, E_k^{2\kappa_k + 1/2} E_i^{2\kappa_i + \delta}  \left[(\kappa_i + \delta)E_k - (\kappa_k + 1)E_i \right] + o(\e^2).   
\end{equation}
Therefore, \eqref{e:triangle} can be rewritten, when $\e$ goes to $0$, as
\begin{multline*}
 \left. \frac{\dd T_i}{\dd t} \right|_{t=0} = \e^2 \; \frac{C_B}{6\rho}  \iint_{(\R^3)^2} \iint_{(0,+\infty)^2}  E_k^{2\kappa_k + 1/2}  E_i^{2\kappa_i + \delta}  \left[(\kappa_i + \delta) E_k - (\kappa_k + 1) E_i\right] \\
    \phantom{\iint_{(0,+\infty)^2}}\times f_\textnormal{in} \,(f_\textnormal{in})_*  \; I^{\delta/2-1} \dd I  \, {I_*}^{\delta/2-1} \dd I_*  \, \dd v \, \dd v_*  + o(\e^2).
\end{multline*}
Recall that $f_\textnormal{in}$ is a two-temperature Maxwell function with mass density $\rho$, and respective kinetic and internal temperatures $T_k^0$ and $T_i^0$. The previous equation may be recast as
\begin{multline} \label{eqproof2landauteller_quasires}
     \left. \frac{\dd T_i}{\dd t} \right|_{t=0} =\e^2 \, \frac{C_B\rho}{6} \left[(\kappa_i + \delta)\mathbf{m}_k^{2\kappa_k + 3/2}(T_k^0) \, \mathbf{m}_i^{2\kappa_i + \delta}(T_i^0) \right. \\
     \phantom{\frac{C_B\rho}{6}} \left. - (\kappa_k + 1) \mathbf{m}_k^{2\kappa_k + 1/2}(T_k^0) \, \mathbf{m}_i^{2\kappa_i + \delta + 1}(T_i^0) \right] + o(\e^2),
\end{multline}
where we set, for any $\beta>0$ and $T>0$, 
\begin{align}
   \mathbf{m}_k^{\beta}(T) &=  \left(\frac{2 \pi T}{m}\right)^{-3} \iint_{(\R^3)^2} e^{-m(|v|^2 +|v_*|^2)/2T} \left[\frac{m}{4}|v-v_*|^2 \right]^{\beta} \, \dd v \, \dd v_*,  \label{eq:momentbetav_core}\\
   \mathbf{m}_{i}^{\beta}(T) &=  \left[\iint_{(0,+\infty)^2}\!\!\!\! e^{-(I+I_*)/T} \, (I I_*)^{\delta/2 - 1} \, \dd I \, \dd I_* \right]^{-1}\!\! \iint_{(0,+\infty)^2}\!\!\!\! e^{-(I+I_*)/T} \, (I+I_*)^{\beta} \, (I I_*)^{\delta/2 - 1} \, \dd I \, \dd I_*. \label{eq:momentbetai_core}
\end{align}
We point out that the latter depends on $\delta$ too. Then we plug \eqref{eq:momentbetav}--\eqref{eq:momentbetai} from Lemma~\ref{lemma:moments} in \eqref{eqproof2landauteller_quasires} to get
\begin{equation*} %\label{eqproof3landauteller_quasires}
    \left. \frac{\dd T_i}{\dd t} \right|_{t=0} =\e^2 \, \frac{C_B\rho}{12} \, \frac{\Gamma(2 \kappa_k + 3) \Gamma(2 \kappa_i + 2\delta + 1)}{\Gamma(3/2) \Gamma(\delta)} \, (T_k^0)^{2\kappa_k + 1/2} (T_i^0)^{2\kappa_i + \delta} \left(T_k^0 - T_i^0 \right) + o(\e^2),
\end{equation*}
which ends the proof.
\end{proof}

\subsection*{Global Landau--Teller equations in the quasi-resonant case} 
In the previous Proposition~\ref{prop:Landau--Teller_quasires}, we obtained an explicit formula, for small $\e$, of the derivative around time $t=0$ of the internal temperature of a solution to the homogeneous quasi-resonant Boltzmann equation~\eqref{eq:homogeneousBoltzquasires} with a two-temperature Maxwellian initial condition. Following our discussion from the beginning of the current section, a solution to the homogeneous quasi-resonant Boltzmann equation is expected to remain close to the two-temperature Maxwellian form. 

Hence, one can extrapolate that, if $f$ solves \eqref{eq:homogeneousBoltzquasires} with a kernel $B_\e$ defined in~\eqref{e:B_esp_B_ref}, for a small $\e$, and with an initial condition close to a two-temperature Maxwellian, we have for any time $t \geq 0$,
\begin{equation} \label{eq:approx}
    (T_i(t), \, T_k(t)) \approx (\overline{T_i}(t), \, \overline{T_k}(t)),
\end{equation}
where $(\overline{T_i}, \, \overline{T_k})$ solves the ODE system
\begin{align}
       &\frac{\dd\overline{T}_i}{\dd t}(t) =  \e^2 \; \rho \; C_{\delta,\kappa_k,\kappa_i} \; \;  \overline{T}_k(t)^{2 \kappa_k + 1/2} \; \;  \overline{T}_i(t)^{2 \kappa_i + \delta} \; \; (\overline{T}_k(t) - \overline{T}_i(t)), \qquad t \geq 0, \label{ode1}  \\
      &\frac32  \frac{\dd\overline{T}_k}{\dd t}(t) + \frac{\delta}{2} \frac{\dd\overline{T}_i}{\dd t}(t) = 0, \quad \qquad \qquad \qquad \qquad \qquad \qquad \qquad \qquad \quad \; \; t \geq 0,\\
      &(\overline{T}_i(0),\overline{T}_k(0)) = (T_i(0),T_k(0)), \label{ode3}
\end{align}
using the same notations as in Proposition~\ref{prop:Landau--Teller_quasires}. The goal of the next section is then to check, through some numerical experiments, the validity of \eqref{eq:approx}.
 
\section{Numerical experiments} \label{section:numerical_quasires}
In this section, we conduct a numerical experiment to check the validity of our statement~\eqref{eq:approx}, providing an equation on the relaxation of the kinetic and internal temperatures towards each other, in a quasi-resonant context. We highlight that it relies on the assumption that, in the quasi-resonant setting, the distribution stays at all times close to a two-temperature Maxwellian, which statement is therefore also numerically verified here.

\medskip

\noindent In order to achieve this goal, we proceed as follows.
\begin{enumerate}[label=(\roman*)]
    \item We simulate the $3$D space-homogeneous Boltzmann equation~\eqref{eq:homogeneousBoltzquasires} associated to the collision kernel~\eqref{e:B_esp_B_ref}, with a two-temperature Maxwellian initial condition. We denote the kinetic and internal temperatures of the solution at time $t$ by $T_k(t)$ and $T_i(t)$.
    \item We numerically solve the ODE system \eqref{ode1}--\eqref{ode3} with the same initial conditions of temperature as the ones of DSMC. %(with the same notations as in Proposition~\ref{prop:Landau--Teller_quasires})
%   \begin{subequations}
%    \begin{align}
%       &\overline{T}'_i(t) =  \e^2 \; \rho \; C_{\delta,\kappa_k,\kappa_i} \; \;  \overline{T}_k(t)^{2 \kappa_k + \frac12} \; \;  \overline{T}_i(t)^{2 \kappa_i + \delta} \; \; (\overline{T}_k(t) - \overline{T}_i(t)), \qquad t \geq 0, \label{ode1}  \\
%      &\frac32 \; \overline{T}'_k(t) + \frac{\delta}{2} \; \overline{T}'_i(t) = 0, \qquad \qquad \qquad \qquad \qquad \qquad \qquad \qquad \qquad \quad \; \; t \geq 0,\\
%      &(\overline{T}_i(0),\overline{T}_k(0)) = (T_i(0),T_k(0)). \label{ode3}
%    \end{align}
%    \end{subequations}
    \item We compare $t \mapsto T_i(t)$ with $t \mapsto \overline{T}_i(t)$: if the two curves globally coincide, then we observe experimentally the validity of~\eqref{eq:approx} and the expected behaviour (Figure~\ref{fig:rep_quasires_dynamic}).
\end{enumerate}

\subsection{Main experiment: a comparison between Landau-Teller and DSMC} \label{ssec:experiment}

\noindent Numerically solving~\eqref{ode1}--\eqref{ode3} is straightforward. We simulate the three-dimensional homogeneous Boltzmann equation~\eqref{eq:homogeneousBoltzquasires} associated with the collision kernel~\eqref{e:B_esp_B_ref}, with a two-temperature Maxwellian initial condition, through the DSMC method~\cite{bird2013}, where each particle is endowed with its velocity $v \in \R^3$ and internal energy quantile $q\in \R_+$. For a discussion on the use of energy quantiles $q$ instead of energy levels $I$ in numerical simulations, see~\cite{bisi2025modelling}. The mass density, average velocity, average temperature, kinetic and internal temperatures of $f(t)$ are respectively denoted $\rho > 0$, $u \in \R^3$, $T_{\textnormal{eq}} > 0$ (these quantities are constant w.r.t. time) and $T_k(t)$ and $T_i(t)$. In particular, for all $t \geq 0$,
\[
3 T_k(t) + \delta T_i(t) = (3 + \delta) T_{\textnormal{eq}}.
\]
In our DSMC simulation, we consider the functions $\varphi$, $\eta$ and for some $\e > 0$, the kernel $B_\e$ of the form~\eqref{choix:varphi_quasires}--\eqref{e:B_esp_B_ref}, with
\begin{align*}
&C_B = 2, \quad b = \frac{1}{4 \pi}, \quad m = 1, \quad \rho = 1, \quad \delta = 2, \quad  \kappa_k = \frac12, \quad \kappa_i = -\frac12, \quad \gamma = \delta + 1,\\
&c_\eta(R,R') = \sqrt{\eta'(R) \eta'(R')}.
\end{align*}
We set the initial temperatures to be
\[
T_k^0 = 1, \qquad T_i^0 = 50.
\]
We point out that the equilibrium temperature is
\(T_{\textnormal{eq}} = 20.6\) and that all values are here adimensionned. We also mention that there are no restrictions (for instance, of closeness) on the initial temperatures to consider.

\noindent Our main numerical experiment is conducted taking $\e = 10^{-1}$ (which is already in the quasi-resonant regime) and $10^5$ numerical particles. We show in Figure~\ref{fig:xp_num_quasires} the result of the simulation. The black curve of Figure~\ref{fig:xp_num_quasiresA} indicates the time evolution of the internal temperature $T_i$ of the system simulated through the DSMC algorithm, while the gray dashed curve is obtained by solving the ODE system~\eqref{ode1}--\eqref{ode3} with the solver \verb|solve_ivp| from the Python package \verb|scipy.integrate|. The gray dotted line corresponds to the equilibrium temperature $T_{\textnormal{eq}}$. We plot on Figure~\ref{fig:relative_error} the graph of $|T_i - \overline{T}_i|/\overline{T}_i$ in semi-log scale, the relative error between $T_i$ (DSMC) and $\overline{T}_i$ (Landau--Teller), with respect to time.

\begin{figure}[!ht]
    \centering
    \begin{subfigure}[t]{0.7\textwidth}
    \centering
    \includegraphics[width=.85\textwidth]{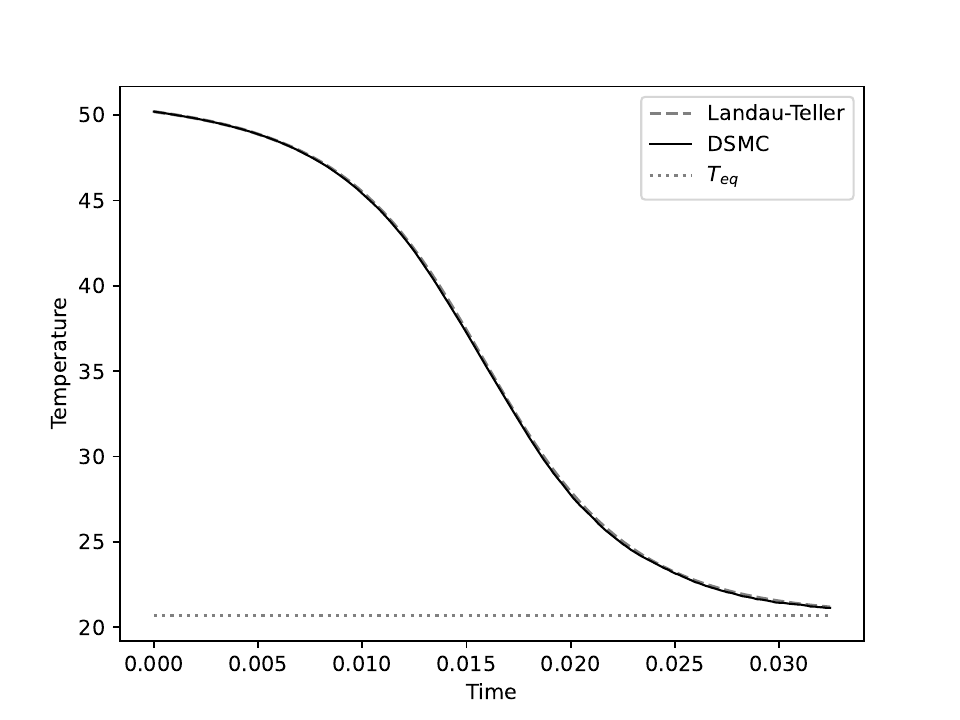}
    \caption{Internal temperatures $T_i$ (DSMC) \\and $\overline{T}_i$ (Landau--Teller) w.r.t. time.}
    \label{fig:xp_num_quasiresA}
    \end{subfigure}
    
    \begin{subfigure}[b]{0.7\textwidth}
            \centering
    \includegraphics[width=.85\textwidth]{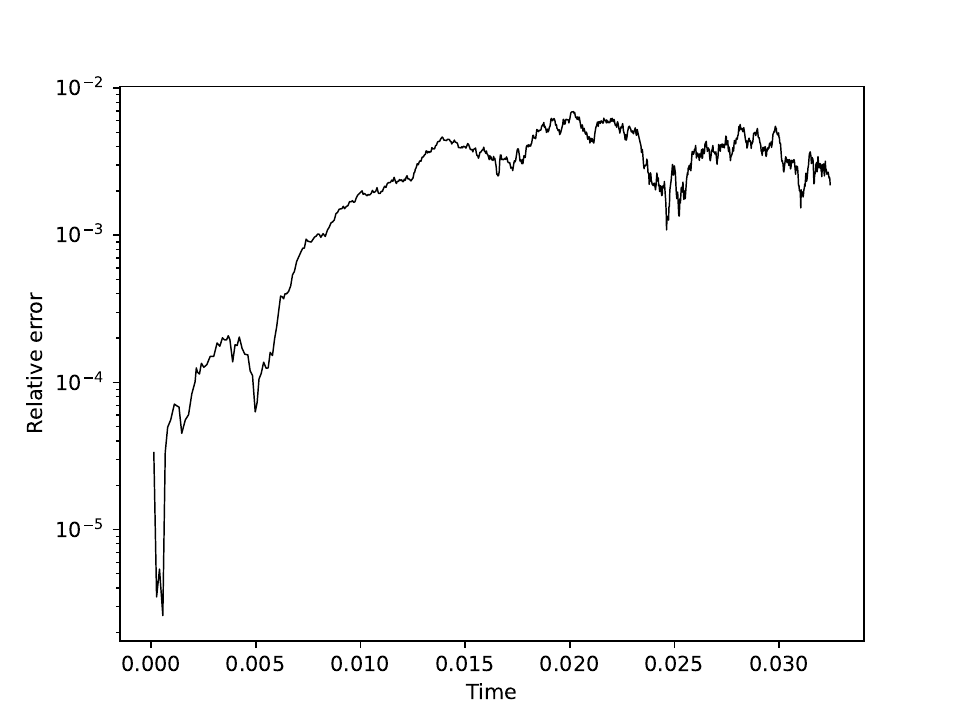}
    \caption{Relative error $|T_i - \overline{T}_i|/\overline{T}_i$ in semi-log scale, \\w.r.t. time.}
    \label{fig:relative_error}
    \end{subfigure}
    \caption{Comparison of the internal temperatures $T_i$ and $\overline{T}_i$ obtained respectively by DSMC simulation and by solving the Landau--Teller ODE system, with $\e = 10^{-1}$.}
    \label{fig:xp_num_quasires}
\end{figure}

\medskip

\noindent We observe on Figure~\ref{fig:xp_num_quasires} a firm agreement between the DSMC-computed $T_i$ and the Landau--Teller-solved $\overline{T}_i$ in a quasi-resonant setting with $\e = 10^{-1}$.

\begin{remark} 
We also conducted a numerical experiment in a far-from-resonance setting, with $\e = 10$, and observed in this case, as expected, no matching between the curves of the internal temperatures given respectively by the DSMC simulation and the solving of the Landau--Teller ODE system.
\end{remark}

\begin{remark} \label{remark:3}
We also conducted two experiments with initial distributions which are not two-temperature Maxwellians: a uniform distribution in both velocity and internal energy, and the product of an anisotropic distribution in velocities and a uniform one in internal energy. Using the Henze-Zikler~\cite{henze1990class} normality and Levene~\cite{brown1974robust} isotropy tests for the velocity part and Kolmogorov-Smirnov test (comparing the quantile functions) for the internal part, we observed a relaxation time toward the two-temperature Maxwellian of the order of $10^{-4}$, much smaller than the relaxation time of the temperatures towards each other, of the order of $10^{-2}$ (see Figure~\ref{fig:xp_num_quasires}), providing a numerical validation of the expected behaviour presented in Figure~\ref{fig:rep_quasires_dynamic}.
\end{remark}

These result support, at least for our chosen set of parameters, the statements of Section~\ref{sec:2kasires}: the kinetic and internal temperatures of a solution to the homogeneous quasi-resonant Boltzmann equation, associated to the kernel $B_\e$ defined in \eqref{choix:varphi_quasires}, for some $\e$ small and two-temperature Maxwellian initial condition, are close to the solution to the Landau--Teller ODE system \eqref{ode1}--\eqref{ode3}.

This moreover somehow provides a numerical validation on the expected behaviour of the quasi-resonant dynamics schematically presented in Figure~\ref{fig:rep_quasires_dynamic}: it does appear reasonable to understand the latter as composed of two parts, a resonant-like dynamics in short-time making the distribution relax towards a two-temperature Maxwellian, and then a relaxation of the two temperatures towards each other driven by a Landau--Teller-type ODE system, in particular in the light of Remark~\ref{remark:3}.

\subsection{Behaviour with a vanishing quasi-resonance parameter} \label{ssec:behaviour_eps_small} In the previous subsection, we checked the validity of our statement \eqref{eq:approx} on a given set of parameters, validity that should be reinforced as the quasi-resonance parameter $\varepsilon$ vanishes. In this subsection, we numerically investigate deeper the behaviour of the relative error between the internal temperatures given respectively by the DSMC simulation and the Landau--Teller system, $|T_i - \overline{T}_i|/\overline{T}_i$, in the vanishing $\e$ limit.

We consider the same set of physical parameters as in the previous subsection. We run the simulation for a range of values of $\e$, and denote for each $\e > 0$ by $t \mapsto T^\e_i(t)$ and $t \mapsto \overline{T}_i^\e(t)$ the internal temperatures obtained respectively with the DSMC simulation and solving the ODE system. We study the error between the dynamics provided by the DSMC simulation and the Landau--Teller ODE system on a complete relaxation to equilibrium. Since the time of relaxation to equilibrium depends itself on $\e$ (it scales as $\e^{-2}$), we consider here, as our measure for discrepancy between the two dynamics, the average relative $L^2$-error between $T_i^{\e}$ and $\overline{T}_i^{\e}$, given by
\[
\left(\frac{1}{\tau_\e}\int_0^{\tau_\e} \left(\frac{|T^\e_i(t) - \overline{T}^\e_i(t)|}{\overline{T}^\e_i(t)} \right)^2 \, \dd t \right)^{\frac12},
\]
where $\tau_\e$ is the time of relaxation to equilibrium, depending on $\e$. We then fit the average relative $L^2$-error, seen as a function of $\e$, to a power law in $\e$, using a linear regression through least squares on the logarithm of the relative $L^2$-error.

We show on Figure~\ref{fig:behaviour_eps} the graph of the relative $L^2$-error as a function of $\e$, obtained by running simulations with over $3 \times 10^5$ particles, for $16$ values of $\e$, ranging from $0.2$ to $0.5$ uniformly in log-scale.

\begin{figure}[!ht]
    \centering
    \begin{subfigure}[t]{0.7\textwidth}
    \centering
    \includegraphics[width=.92\textwidth]{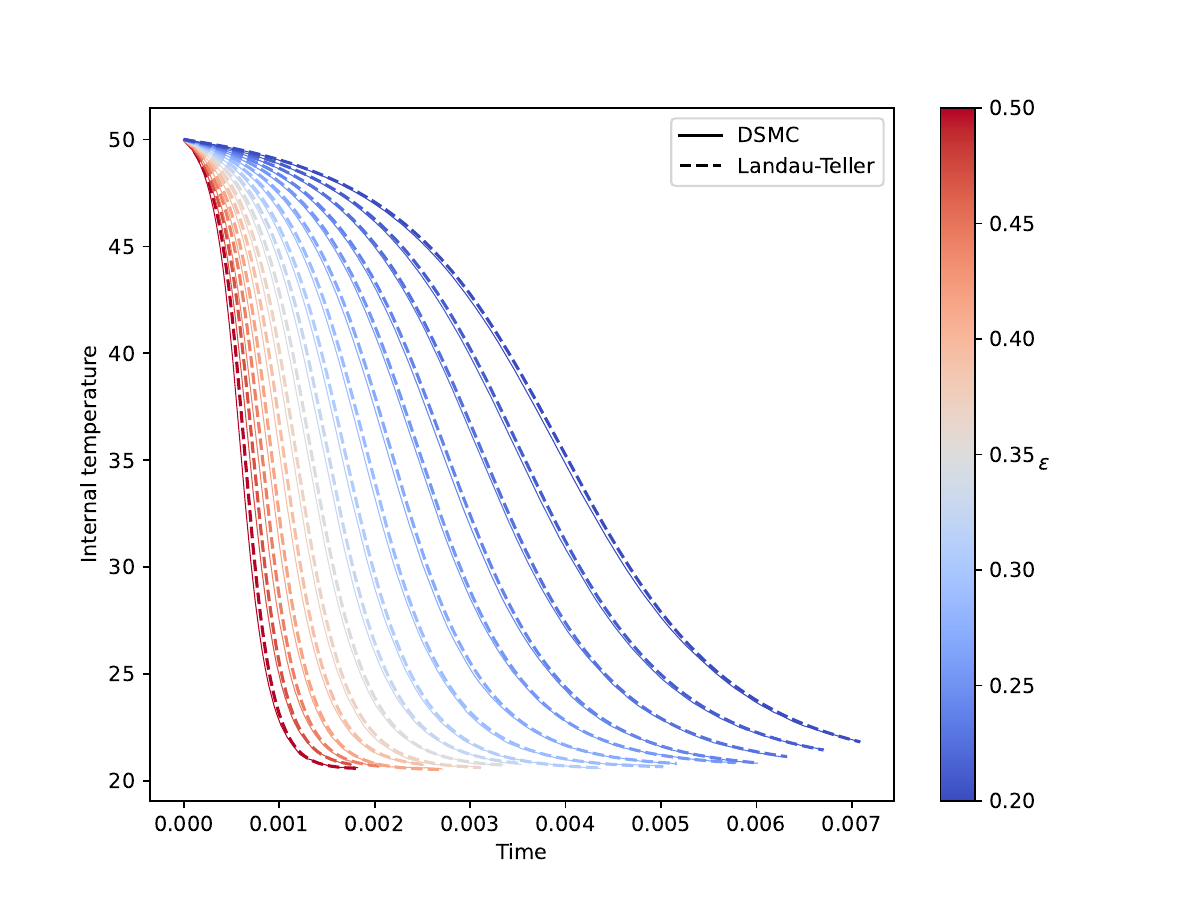}
    \caption{Plots of $t \mapsto T_i(t)$ (DSMC) and $t \mapsto \overline{T}_i(t)$ for various values of $\e$.}
        \label{fig:behaviour_eps_temp}
    \end{subfigure}
    
    \begin{subfigure}[b]{0.7\textwidth}
            \centering
    \includegraphics[width=.83\textwidth]{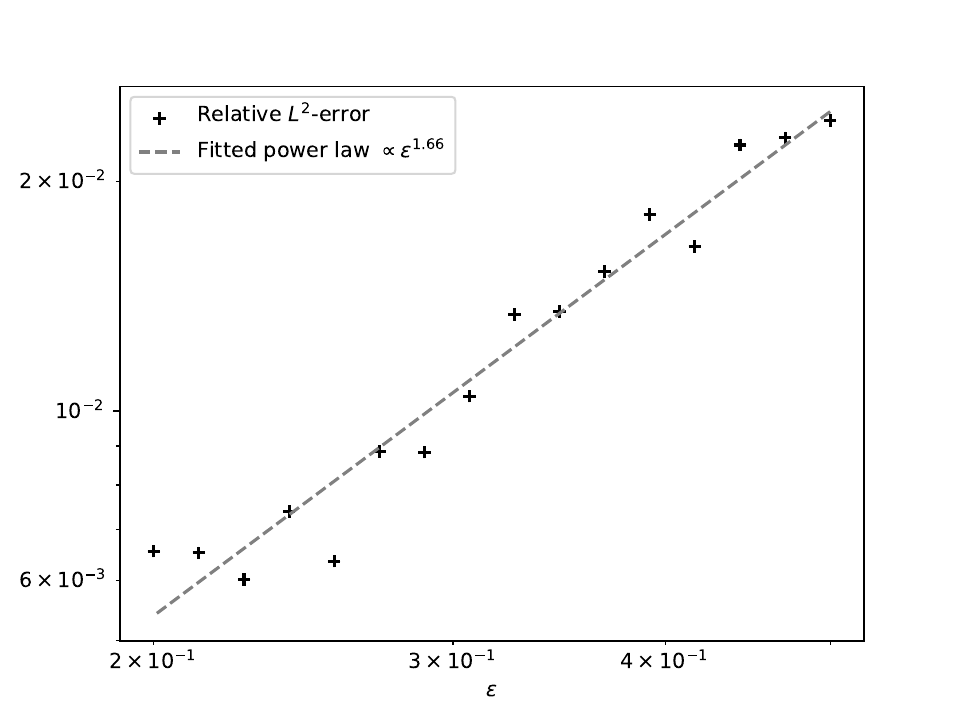}
    \caption{Average relative $L^2$-error between $T_i$ and $\overline{T}_i$ as a function of $\e$.}
    \label{fig:behaviour_eps_err}
    \end{subfigure}
    \caption{Result of the numerical experiment to study the behavior of the average $L^2$-error between $T_i$ and $\overline{T}_i$ relatively to $\e$.}
    \label{fig:behaviour_eps}
\end{figure}

We observe on this test that the average relative $L^2$-error behaves like $\e^{5/3}$. Although $5/3$ is an interesting non-trivial order of convergence, we draw no definite conclusion as of the actual order of convergence, since the comparison is made here with a DSMC simulation, and we also do not exclude that the power may change for smaller values of $\e$.

\subsection{Conclusion} We have numerically observed all behaviours of the quasi-resonant dynamics announced in Section~\ref{sec:2kasires}. Namely, with our parameters and $\e=0.1$, the typical time of convergence to a two-temperature Maxwellian is of the order of $10^{-4}$ (Remark~\ref{remark:3}), which is much shorter than the typical time of convergence of the two temperatures towards each other, of the order of $10^{-2}$ (Figure~\ref{fig:xp_num_quasires}). This latter time indeed becomes larger as $\e$ diminishes (Subfigure~\ref{fig:behaviour_eps_temp}). We have also shown the validity of the Landau--Teller equation (Figure~\ref{fig:xp_num_quasires}), which gets all the more accurate as $\e$ diminishes (Subfigure~\ref{fig:behaviour_eps_err}).

\appendix

\section{Characterization of the collision invariants in the quasi-resonant case} \label{a:collinv}

In this appendix section, we prove that we recover the same collision invariants in the quasi-resonant and standard polyatomic cases.

\begin{proposition} \label{p:collinvQR}
If $\psi$ is a continuous collision invariant in the quasi-resonant case, then there exist $a_1$, $a_2 \in \R$ and $p \in \R^3$ such that, for any $(v,I) \in \R^3 \times (0,+\infty)$,
$$\psi(v,I) = a_1 + p \cdot v + a_2 \left(\frac{1}{2} m|v|^2 + I \right).$$
\end{proposition}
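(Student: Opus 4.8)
The plan is to exploit that the quasi-resonant admissibility condition is strictly weaker than resonance: whenever $R=R'$ one has $|\eta(R)-\eta(R')|=0\le\e$, and moreover $R'\in(0,1)$ forces $I'+I'_*=I+I_*<E$, so every resonant collision is in particular an admissible quasi-resonant collision. Consequently, any continuous $\e$-quasi-resonant collision invariant $\psi$ satisfies the invariance identity \eqref{eq:collision_inv_quasires} on the whole resonant submanifold $\{R=R'\}$, i.e.\ it is a resonant collision invariant. This places us in the setting of \cite[Lemma 3]{boudin:hal-03629556}, which I would invoke to obtain the resonant form
\begin{equation*}
\psi(v,I) = a_1 + p\cdot v + a_2\,\frac{m}{2}|v|^2 + a_3\,I,
\end{equation*}
for some $a_1,a_3\in\R$, $p\in\R^3$ and $a_2\in\R$, where a priori the kinetic coefficient $a_2$ and the internal coefficient $a_3$ may differ; this discrepancy is precisely the two-temperature feature of the resonant equilibria.

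The remaining and genuinely new step is to upgrade this to $a_2=a_3$ by using collisions that are quasi-resonant but not resonant. Such collisions exist precisely because the strip $|\eta(R)-\eta(R')|\le\e$ has positive width for $\e>0$, $\eta$ being a $\mathcal{C}^\infty$-diffeomorphism of $(0,1)$ onto $\R$. I would fix one such quadruplet with $R\neq R'$, substitute the expression above into \eqref{eq:collision_inv_quasires}, and use momentum conservation \eqref{e:momconserv} to cancel the contributions of $p\cdot v$, which leaves
\begin{equation*}
a_2\,\frac{m}{2}\bigl(|v'|^2+|v'_*|^2-|v|^2-|v_*|^2\bigr) = a_3\,\bigl(I+I_*-I'-I'_*\bigr).
\end{equation*}
Total energy conservation \eqref{e:totalenrgconserv} lets me rewrite the kinetic difference on the left as $I+I_*-I'-I'_*$, so the identity collapses to $(a_2-a_3)(I+I_*-I'-I'_*)=0$. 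Since $R\neq R'$ forces $I+I_*\neq I'+I'_*$ through the definition \eqref{e:defRRprime} of $R$ and $R'$, I conclude $a_2=a_3$; renaming this common value gives the announced form $\psi(v,I)=a_1+p\cdot v+a_2\left(\tfrac12 m|v|^2+I\right)$.

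I expect the only real obstacle to be bookkeeping rather than conceptual. One must check that the hypotheses of \cite[Lemma 3]{boudin:hal-03629556} are met under the present normalization with internal-energy measure $\varphi(I)\,\dd I$, in particular the continuity of $\psi$ and the freedom to redistribute internal energy at fixed $I+I_*$ along the resonant manifold, and that the chosen non-resonant quadruplet genuinely lies in the support of $B_\e$ as characterized by \eqref{e:positivity_Beps_quasires}. Both points follow from the fact that $\e>0$ opens a nondegenerate neighbourhood of the diagonal $R=R'$, so no degeneracy prevents selecting admissible collisions with $R\neq R'$.
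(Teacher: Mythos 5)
Your proposal is correct and follows essentially the same route as the paper's own proof: both restrict the invariance identity to the resonant submanifold $R=R'$ (which lies in the support of $B_\e$ for every $\e>0$) in order to invoke the characterization of resonant collision invariants from \cite{boudin:hal-03629556}, obtaining $\psi(v,I)=a_1+p\cdot v+\frac{a_2}{2}m|v|^2+a_3 I$, and then exploit an admissible quasi-resonant collision with $R\neq R'$, hence $I'+I'_*\neq I+I_*$, together with the conservation laws \eqref{e:momconserv}--\eqref{e:totalenrgconserv} to conclude $a_2=a_3$. No gaps to report.
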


\begin{proof}
As a collision invariant in the quasi-resonant case, $\psi$ satisfies
\begin{equation} \label{e:invarpsi}
\psi \left( \frac{v + v_*}{2} +
2 \sqrt{\frac{E-I'-I_*'}{m}}\sigma , I' \right) + \psi \left( \frac{v + v_*}{2} -
2 \sqrt{\frac{E-I'-I_*'}{m}}\sigma, I_*'\right) = \psi(v,I) + \psi(v_*,I_*)
\end{equation}
for any $(v,v_*,I,I_*,I',I_*',\sigma)$ such that $B_{\e}(v,v_*,I,I_*,I',I_*',\sigma)$ is non-zero. In particular, \eqref{e:invarpsi} holds whenever $R=R'$, using the notations defined in \eqref{e:defRRprime}, \ie~in the resonant case. %Usiing
Thanks to the computations detailed in \cite{boudin:hal-03629556}, there exist $a_1$, $a_2$, $a_3\in \R$ and $p \in \R^3$ such that, for any $(v,I)$, 
$$\psi(v,I) = a_1 + p \cdot v + \frac{a_2}{2} m|v|^2 + a_3 I.$$
Replacing $\psi$ by its previous expression in \eqref{e:invarpsi}, we obtain
$$p \cdot (v'+v_*') + \frac{a_2}{2} \left( m|v'|^2 + m|v_*'|^2 \right) +a_3( I' + I_*') = p \cdot (v+v_*) + \frac{a_2}{2} \left( m|v|^2 + m|v_*|^2 \right) +a_3( I + I_*).$$
Thanks to the microscopic conservations \eqref{e:momconserv}--\eqref{e:totalenrgconserv} of momentum and total energy, we get
$$(a_2-a_3) (I' + I'_* - I - I_*) = 0.$$
Finally, we can choose $(v,v_*,I,I_*,I',I_*',\sigma)$ such that $B_{\e}(v,v_*,I,I_*,I',I_*',\sigma)\neq 0$ and $R\neq R'$ simultaneously, so that 
$I'+I'_* \neq I+I_*$, and subsequently obtain $a_2 = a_3$.
\end{proof}

\section{Various technical lemmas}  \label{s:computlemma}

The first lemma is dedicated to the computation of $\mph$ and its derivative, it is presented here for the sake of completeness, as it is a well-known result. 
\begin{lemma} \label{l:exprmmphi}
Recall the expression \eqref{e:gfunction} of $\mph$, that is, for any $E\ge0$, 
\begin{equation} \label{e:gfunctionappen}
    \mph (E)= \iint_{(0,+\infty)^2} \, \mathds{1}_{[I'+I'_* \le E]}(I',I_*')\, \varphi(I')\, \varphi(I_*')\, \dd I' \, \dd I_*'=\int_0^E\int_0^J \varphi(I')\, \varphi(J-I')\, \dd I' \, \dd J.
\end{equation}
Then, for any $E\ge0$, we have
\begin{equation} \label{e:calcmphiprime}
    \mph' (E)= \int_0^E \varphi(I')\, \varphi(E-I')\, \dd I'.
\end{equation}
Moreover, if $\varphi : I \mapsto I^{\delta/2-1}$ for some $\delta>0$, then, for any $E\ge0$, we have
\begin{equation} \label{e:calcmphi}
    \mph'(E) = \frac{\Gamma(\delta/2)^2}{\Gamma(\delta)} \, E^{\delta -1}, \qquad \mph (E) =  \frac{\Gamma(\delta/2)^2}{\Gamma(\delta+1)} \, E^{\delta}.
\end{equation}
\end{lemma}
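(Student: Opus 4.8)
The plan is to treat the three displayed identities in turn; the only genuine computation is a Beta-integral arising in the polytropic case. First I would establish the second equality in \eqref{e:gfunctionappen} by a change of variables in the double integral. Setting $J=I'+I_*'$ and retaining $I'$ as the second variable, the map $(I',I_*')\mapsto(I',J)$ has unit Jacobian, the constraint $I'+I_*'\le E$ becomes $0\le I'\le J\le E$, and $\varphi(I_*')=\varphi(J-I')$. Since the integrand is nonnegative, Fubini's theorem applies and yields
\[
\mph(E)=\int_0^E\left(\int_0^J \varphi(I')\,\varphi(J-I')\,\dd I'\right)\dd J.
\]

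Next I would obtain \eqref{e:calcmphiprime} by differentiating this expression with respect to $E$. Because $\varphi$ is continuous and nonnegative, the inner integral $J\mapsto \int_0^J \varphi(I')\,\varphi(J-I')\,\dd I'$ is a continuous function of $J$, so the fundamental theorem of calculus applies to the outer integral and gives directly $\mph'(E)=\int_0^E \varphi(I')\,\varphi(E-I')\,\dd I'$.

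Finally, for the polytropic weight $\varphi(I)=I^{\delta/2-1}$, I would evaluate this convolution through the substitution $I'=Es$, $\dd I'=E\,\dd s$:
\[
\mph'(E)=\int_0^E (I')^{\delta/2-1}(E-I')^{\delta/2-1}\,\dd I' = E^{\delta-1}\int_0^1 s^{\delta/2-1}(1-s)^{\delta/2-1}\,\dd s = E^{\delta-1}\,\frac{\Gamma(\delta/2)^2}{\Gamma(\delta)},
\]
where the last equality is the Euler Beta-function identity $B(\delta/2,\delta/2)=\Gamma(\delta/2)^2/\Gamma(\delta)$. Integrating from $0$ to $E$ and invoking the recurrence $\Gamma(\delta+1)=\delta\,\Gamma(\delta)$ then yields $\mph(E)=\Gamma(\delta/2)^2\,E^{\delta}/\Gamma(\delta+1)$, which is \eqref{e:calcmphi}.

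There is no substantial obstacle here, as the statement is classical and each step is routine. The only points deserving a word of care are the nonnegativity of the integrand (which lets one apply Fubini and differentiate under the integral without further hypotheses), the convergence of the Beta integral at its endpoints (ensured by $\delta>0$), and the use of the Gamma recurrence to pass from $\Gamma(\delta)$ to $\Gamma(\delta+1)$ in the final normalization.
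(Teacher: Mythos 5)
Your proposal is correct and follows essentially the same route as the paper's proof: the second expression in \eqref{e:gfunctionappen} yields \eqref{e:calcmphiprime} by the fundamental theorem of calculus, and the polytropic case reduces, after rescaling $I'$ by $E$, to the Beta integral $\B(\delta/2,\delta/2)=\Gamma(\delta/2)^2/\Gamma(\delta)$, with the Gamma recurrence giving the final normalization. Your write-up merely makes explicit the Fubini/change-of-variables step that the paper absorbs into the statement of the lemma.
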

\begin{proof}
First, \eqref{e:calcmphiprime} is a straightforward consequence of  \eqref{e:gfunctionappen}. Then, if $\varphi : I \mapsto I^{\delta/2-1}$, performing the change of variable $I'\to I'/E$ in \eqref{e:calcmphiprime}, and denoting the Beta function by $\B$, we get, for any $E\ge 0$, 
$$\mph'(E)= \B(\delta/2, \delta/2) E^{\delta -1},$$
and eventually \eqref{e:calcmphi} thanks to the relationship between the Beta and Gamma functions.
\end{proof}

Then the following lemma is an intermediate result leading to relevant Taylor expansions with respect to the quasi-resonance parameter $\e$.

\begin{lemma} \label{lemma:developpementlimite}
Let $\psi \in \mathcal{C}^1((0,1))$. Then, for any $R_0 \in (0,1)$, we have
\begin{equation} \label{e:developpementlimitelemme}
    \int_0^1 (R - R_0) \, \psi(R) \, \chi_{\e}(R,R_0) \, \dd R \underset{\e \to 0}{=} \frac{1}{3 \,
    \eta'(R_0)^3} \left[\eta'(R_0)\psi'(R_0) - \eta''(R_0)\, \psi(R_0) \right] \e^2 + o(\e^2),
\end{equation}
where $\chi_\e$ is defined in \eqref{e:defchiepsi}.
\end{lemma}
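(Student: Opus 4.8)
The plan is to linearize the truncation via the diffeomorphism $\eta$, then exploit the fact that the integrand carries a factor vanishing at $R_0$: this factor both annihilates the would-be leading term and allows the needed second-order precision to be supplied by a \emph{smooth} factor, circumventing the limited regularity of $\psi$ and $c_\eta$. First I would perform the change of variable $z=\eta(R)$ in \eqref{e:developpementlimitelemme}. Writing $z_0=\eta(R_0)$ and recalling \eqref{e:defchiepsi}, the constraint $|\eta(R)-\eta(R_0)|\le\e$ becomes $|z-z_0|\le\e$, and the integral takes the form
\[
\frac{1}{2\e}\int_{z_0-\e}^{z_0+\e} G(z)\,\dd z, \qquad G(z)=\big(\eta^{-1}(z)-R_0\big)\,\psi(\eta^{-1}(z))\,c_\eta(\eta^{-1}(z),R_0)\,(\eta^{-1})'(z).
\]
I would then rescale $z=z_0+\e s$ to rewrite this as $\tfrac12\int_{-1}^1 G(z_0+\e s)\,\dd s$, and symmetrize it as $\tfrac12\int_0^1\big[G(z_0+\e s)+G(z_0-\e s)\big]\,\dd s$ to make the cancellation of odd-order contributions transparent.

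The crucial structural observation is the factorization $G=a\,H$, where $a(z)=\eta^{-1}(z)-R_0$ is $\mathcal{C}^\infty$ with $a(z_0)=0$, while $H(z)=\psi(\eta^{-1}(z))\,c_\eta(\eta^{-1}(z),R_0)\,(\eta^{-1})'(z)$ is merely $\mathcal{C}^1$. Expanding $a(z_0\pm\e s)=\pm a'(z_0)\e s+\tfrac12 a''(z_0)\e^2 s^2+O(\e^3)$ (a genuine second-order Taylor expansion, legitimate since $a$ is smooth) together with $H(z_0\pm\e s)=H(z_0)\pm H'(z_0)\e s+o(\e)$ (only first order, all that $\mathcal{C}^1$ regularity permits, with remainder uniform in $s\in[-1,1]$), and multiplying, the term linear in $\e s$ cancels when the $+s$ and $-s$ contributions are added, while the surviving even part is
\[
G(z_0+\e s)+G(z_0-\e s)=\big[2a'(z_0)H'(z_0)+a''(z_0)H(z_0)\big]\e^2 s^2+o(\e^2),
\]
uniformly in $s$. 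Integrating in $s$ over $(0,1)$, using $\int_0^1 s^2\,\dd s=\tfrac13$, and multiplying by $\tfrac12$ leaves the leading coefficient $\tfrac16\big[2a'(z_0)H'(z_0)+a''(z_0)H(z_0)\big]$.

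It then remains to evaluate four constants. The inverse-function rules give $a'(z_0)=(\eta^{-1})'(z_0)=1/\eta'(R_0)$ and $a''(z_0)=(\eta^{-1})''(z_0)=-\eta''(R_0)/\eta'(R_0)^3$. For $H$, the diagonal condition \eqref{e:reson-c_eta}, namely $c_\eta(R_0,R_0)=\eta'(R_0)$, yields $H(z_0)=\psi(R_0)$; differentiating $H$ and using $\pa_1 c_\eta(R_0,R_0)=\eta''(R_0)/2$ from Remark~\ref{r:deriv-c_eta} gives $H'(z_0)=\psi'(R_0)/\eta'(R_0)-\psi(R_0)\eta''(R_0)/(2\eta'(R_0)^2)$. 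Substituting these into the coefficient and collecting terms collapses the expression to $\tfrac{1}{3\eta'(R_0)^3}\big[\eta'(R_0)\psi'(R_0)-\eta''(R_0)\psi(R_0)\big]$, which is exactly the right-hand side of \eqref{e:developpementlimitelemme}.

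The main obstacle — and the reason for arranging the computation this way — is precisely the low regularity: $G$ is only $\mathcal{C}^1$, so a direct second-order Taylor expansion of $G$ is unavailable and a naive attempt stalls at $o(\e)$, too coarse to reach the claimed $\e^2$ order. The device that resolves this is the splitting $G=aH$: the smooth factor $a$ legitimately supplies the second derivative, its vanishing at $z_0$ removes the zeroth-order term, and the symmetrization kills the odd first-order term, so that $H$ is never differentiated more than once. The uniformity in $s$ of the $o(\e^2)$ remainder (which follows from uniform continuity of $H'$ near $z_0$) is what justifies passing it through the $s$-integral.
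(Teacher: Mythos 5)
Your proof is correct, and it reaches the result by a route that is mechanically different from the paper's, even though both exploit the same structural fact: the factor vanishing at the resonant point supplies the order of precision that the mere $\mathcal{C}^1$ regularity of $\psi$ and $c_\eta$ cannot. The paper introduces an antiderivative $\Psi$ of $R \mapsto (R-R_0)\,\psi(R)\,c_\eta(R,R_0)$, writes the integral as $\frac{1}{2\e}\left[\Psi(g(\e))-\Psi(g(-\e))\right]$ with $g(x)=\eta^{-1}(\eta(R_0)+x)$, and performs a third-order Taylor expansion of $\Psi\circ g$ at $0$; the crucial observation there is that $\Psi$, though only $\mathcal{C}^2$ on $(0,1)$, admits a third derivative \emph{at the point} $R_0$, precisely because of the vanishing factor, and the computation then reduces to evaluating $\Psi''(R_0)$, $\Psi'''(R_0)$, $g'(0)$, $g''(0)$. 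You instead never integrate: after the same change of variables $z=\eta(R)$, you rescale to a fixed domain, symmetrize, and split the integrand as $G=aH$ with $a$ smooth and vanishing at $z_0$ and $H$ only $\mathcal{C}^1$, expanding each factor to its available order and letting the symmetrization cancel the odd contributions. What each approach buys: the paper's version avoids any discussion of uniformity of remainders under the integral sign (everything is a pointwise Taylor expansion of a single function of one variable), but it relies on the slightly delicate Taylor--Young expansion at a point of enhanced differentiability; your version never differentiates the low-regularity factor more than once and uses only standard first-order expansions, at the price of having to justify (as you correctly do, via continuity of $H'$ near $z_0$) that the $o(\e^2)$ remainder is uniform in the rescaled variable $s$ so it can be integrated. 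Your evaluations of the constants $a'(z_0)$, $a''(z_0)$, $H(z_0)$, $H'(z_0)$ — including the use of the diagonal conditions \eqref{e:reson-c_eta} and \eqref{e:deriv-c_eta} — are exact counterparts of the paper's evaluations of $g'(0)$, $g''(0)$, $\Psi''(R_0)$, $\Psi'''(R_0)$, and both collapse to the same coefficient.
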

\begin{proof}
Let $R_0 \in (0,1)$, denote by $\Psi$ an antiderivative of $R \mapsto (R - R_0) \psi(R) c_{\eta}(R,R_0)$, and set, for any $x\in\R$,
$$g(x) = \eta^{-1} \left( \eta(R_0) + x \right).$$
That latter function $g$ is $\mathcal{C}^\infty$ on $\R$, as $\eta$ is itself $\mathcal{C}^\infty$ and $\eta' > 0$. We may write, for any $\e>0$,
$$\int_0^1 (R - R_0) \, \psi(R) \, \chi_{\e}(R,R_0) \, \dd R = \frac{1}{2\e} \int_{g(-\e)}^{g(\e)} (R - R_0) \, \psi(R) \, c_{\eta}(R,R_0) \, \dd R.$$
We highlight that, while $\Psi$ is only $\mathcal{C}^2$ on $(0,1)$, it is $\mathcal{C}^3$ at the point $R = R_0$. Then a straightforward Taylor expansion of $\Psi \circ g$ near $0$ yields
\begin{equation} \label{e:dlintermed}
\int_0^1 (R - R_0) \, \psi(R) \, \chi_{\e}(R,R_0) \, \dd R = \left(\Psi \circ g \right)'(0) + \frac{\e^2}{6} \, \left(\Psi \circ g \right)'''(0) + o(\e^2).
\end{equation}
The zeroth-order (in $\e$) term in \eqref{e:dlintermed} vanishes since $\left(\Psi \circ g \right)'(0) = \Psi'(R_0)g'(0)=0$. The second-order term involves
$$\left(\Psi \circ g \right)'''(0) = \Psi' (R_0)\,g'''(0) + 3 \Psi''(R_0)\, g'(0)\,g''(0) + \Psi'''(R_0)\, g'(0)^3.$$
By direct computations, using in particular \eqref{e:reson-c_eta}--\eqref{e:deriv-c_eta}, we get
$$\Psi' (R_0) = 0, \qquad \Psi''(R_0) =  \psi(R_0) \, \eta'(R_0), \qquad \Psi'''(R_0) = 2 \psi'(R_0) \, \eta'(R_0) + \psi(R_0) \, \eta''(R_0).$$
Moreover, we also have, remembering that $\eta'>0$,
$$g'(0) = \frac{1}{\eta'(R_0)}, \qquad g''(0) =  -\frac{\eta''(R_0)}{\eta'(R_0)^3}.$$
Putting everything together, we get~\eqref{e:developpementlimitelemme}. 
%\[
%\mathcal{J}_\e = \frac{\e^2}{6} \left(-3 \cdot \frac{\eta''(R_0)}{\eta'(R_0)^3} \cdot \psi(R_0) + \frac{2 \psi'(R_0) \, \eta'(R_0) + \psi(R_0) \, \eta''(R_0)}{\eta'(R_0)^3} \right) + o(\e^2),
%\]
%which, after simplifications, is the required result.
\end{proof}
\begin{remark} It is worth noticing that, if the regularity of $\psi$ is $\mathcal{C}^2$, the correction term $o(\e^2)$ in \eqref{e:developpementlimitelemme} is actually $\mathcal{O}(\e^4)$. In particular, there is no term of order $\e^3$.
\end{remark}

\begin{comment}
\begin{lemma} \label{lem:diagonale_sym}
Consider an application $f \in \mathcal{C}^1((0,1)^2,\R)$ which is symmetric, in the sense that
\begin{equation} \label{eqpropertysym}
f(x,y) = f(y,x), \qquad \quad (x,y) \in (0,1)^2.
\end{equation}
Define
\[
g(x) := f(x,x), \qquad \quad x \in (0,1),
\]
then for any $x \in (0,1)$ we have
\[
\partial_1 f(x,x) = \partial_2 f(x,x) = \frac12 \, g'(x).
\]
\end{lemma}

\begin{proof}
It comes straightforwardly from the definition of $g$ that
\[
g'(x) = \partial_1 f(x,x) + \partial_2 f(x,x).
\]
Differentiating~\eqref{eqpropertysym} in $x$ yields, for any $(x,y) \in (0,1)^2$,
\[
\partial_1 f(x,y) = \partial_2 f(y,x),
\]
so that in particular,
\(
\partial_1 f(x,x) = \partial_2 f(x,x),
\)
ending the proof.
\end{proof}
\end{comment}

Eventually, we need the following lemma to obtain our Landau--Teller form with explicit coefficients. 
\begin{lemma} \label{lemma:moments}
Consider the functions $\mathbf{m}_k^\beta$ and $\mathbf{m}_i^\beta$ defined in \eqref{eq:momentbetav_core}--\eqref{eq:momentbetai_core}.
\begin{itemize}
    \item Assume $\beta>-3$. Then, for any $T>0$, we have
    \begin{equation} \label{eq:momentbetav}
    \mathbf{m}_k^{\beta}(T)= \frac{\Gamma (\beta + 3/2)}{\Gamma(3/2)} T^{\beta}.
    \end{equation}
    \item Let $\delta>0$ and assume that $\beta>-\delta$. Then, for any $T>0$, we have
    \begin{equation} \label{eq:momentbetai}
    \mathbf{m}_{i}^{\beta}(T) = \frac{\Gamma(\beta + \delta)}{\Gamma(\delta)} T^{\beta}.
   \end{equation}
\end{itemize}
\end{lemma}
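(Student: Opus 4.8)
The plan is to prove the two identities by reducing each to a standard Gamma-integral after diagonalizing the quadratic structure. Both moments are ratios (implicitly or explicitly) of Gaussian-type integrals, so the strategy is the same in spirit: introduce the right change of variables that turns the two-particle integral into a one-dimensional integral against an exponential (or Gaussian) weight, then recognize a Gamma function.

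For the kinetic moment $\mathbf{m}_k^\beta$ defined in \eqref{eq:momentbetav_core}, the key observation is that $|v|^2 + |v_*|^2$ splits into center-of-mass and relative coordinates. I would set $w = (v-v_*)/\sqrt{2}$ and $W = (v+v_*)/\sqrt{2}$, a volume-preserving orthogonal change of variables on $(\R^3)^2$, so that $|v|^2 + |v_*|^2 = |w|^2 + |W|^2$ while $\tfrac{m}{4}|v-v_*|^2 = \tfrac{m}{2}|w|^2$. The $W$-integral then factors out as a pure Gaussian and cancels against part of the normalizing prefactor $(2\pi T/m)^{-3}$, leaving a single isotropic integral in $w\in\R^3$ of the form $\int_{\R^3} e^{-m|w|^2/2T}(\tfrac m2|w|^2)^\beta\,\dd w$. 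Passing to spherical coordinates and substituting $s = m|w|^2/(2T)$ converts this into $\int_0^\infty e^{-s}s^{\beta+1/2}\,\dd s$ up to explicit constants, which is $\Gamma(\beta+3/2)$; the condition $\beta>-3$ is exactly what guarantees integrability at the origin (the radial exponent $2\beta+2>-1$). Careful bookkeeping of the constants then yields \eqref{eq:momentbetav}, with the $\Gamma(3/2)$ in the denominator arising naturally from the $\beta=0$ normalization.

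For the internal moment $\mathbf{m}_i^\beta$ in \eqref{eq:momentbetai_core}, the natural move is the Borgnakke--Larsen-type substitution $J = I + I_*$ and $r = I/(I+I_*)$ already used in the proof of Proposition~\ref{p:mesphireson}, with Jacobian $J$. Under this change $(I I_*)^{\delta/2-1} = J^{\delta-2}(r(1-r))^{\delta/2-1}$ and the extra $J$ from the Jacobian gives $J^{\delta-1}$; crucially the $r$-integral, namely $\int_0^1 (r(1-r))^{\delta/2-1}\,\dd r = \B(\delta/2,\delta/2)$, factors out identically in both the numerator and denominator of \eqref{eq:momentbetai_core} and therefore cancels. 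What remains is the ratio $\big(\int_0^\infty e^{-J/T}J^{\beta+\delta-1}\,\dd J\big)\big/\big(\int_0^\infty e^{-J/T}J^{\delta-1}\,\dd J\big)$; substituting $s=J/T$ in each gives $T^{\beta}\,\Gamma(\beta+\delta)/\Gamma(\delta)$, which is \eqref{eq:momentbetai}. The hypothesis $\beta>-\delta$ is precisely the integrability condition for the numerator at $J=0$.

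I do not expect a genuine obstacle here, since both computations are standard Gaussian/Gamma evaluations once the correct coordinates are chosen; the main point requiring care is the constant tracking in the kinetic case, making sure the prefactor $(2\pi T/m)^{-3}$ and the factored-out $W$-Gaussian combine correctly so that the normalization at $\beta=0$ gives $\mathbf{m}_k^0=1$ (equivalently, that the answer is dimensionally consistent and equals $1$ when $\beta=0$). For the internal case, the only subtlety is to note explicitly that the $r$-integral cancels rather than computing $\B(\delta/2,\delta/2)$, which keeps the argument clean and makes the integrability threshold transparent.
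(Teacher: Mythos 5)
Your proof follows essentially the same route as the paper's: a centre-of-mass/relative-velocity splitting that reduces the kinetic moment to a radial Gamma integral (the paper uses $(g,G)=(v-v_*,\tfrac{v+v_*}{2})$, you use the orthonormal variant with $1/\sqrt{2}$ scaling — a cosmetic difference), and for the internal moment the Borgnakke--Larsen substitution $(I,I_*)\mapsto (J,r)$ whose Beta factor cancels in the numerator/denominator ratio, which is precisely the content of the paper's Lemma~\ref{l:exprmmphi} applied to the ratio $\mathcal{I}_\beta/\mathcal{I}_0$. Both computations are correct as written.

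One small slip worth flagging: your parenthetical claim that the hypothesis $\beta>-3$ is \emph{exactly} the integrability condition ``$2\beta+2>-1$'' is off by a factor of two, since $2\beta+2>-1$ reads $\beta>-3/2$, not $\beta>-3$. The true convergence threshold for $\mathbf{m}_k^{\beta}$ is indeed $\beta>-3/2$: near the diagonal $v=v_*$ the integrand behaves like $|v-v_*|^{2\beta}$ in three relative dimensions. This does not affect your computation (which is valid whenever the integral converges), but it does mean the lemma's stated hypothesis $\beta>-3$ is itself weaker than what convergence of \eqref{eq:momentbetav_core} requires — a point the paper's proof passes over silently. Your corresponding threshold $\beta>-\delta$ for the internal moment is correct.
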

\begin{proof}
We start by proving~\eqref{eq:momentbetav}. We perform the change of variables $(v,v_*) \mapsto (g,G) = (v-v_*, \frac{v+v_*}{2})$ in \eqref{eq:momentbetav_core} and get
\begin{equation*}
\mathbf{m}_k^{\beta}(T) = \left(\frac{4 \pi T}{m}\right)^{-3/2} \int_{\R^3} e^{-m|g|^2/4T} \left[\frac{m}{4}|g|^2 \right]^{\beta} \, \dd g.
\end{equation*}
Changing to polar coordinates leads to
\begin{equation*}
\mathbf{m}_k^{\beta}(T) =\left(\frac{4 \pi T}{m}\right)^{-3/2} \, |\Sb^2| \int_{0}^{+\infty} e^{-mr^2/4T} \left(\frac{mr^2}{4} \right)^{\beta} \, r^2 \, \dd r.
\end{equation*}
Then, performing the change of variable $r \mapsto x = m r^2/4 T$, we obtain 
\begin{equation*}
   \mathbf{m}_k^{\beta}(T)= \left(\frac{4 \pi T}{m}\right)^{-3/2} \, |\Sb^2| \int_{0}^{+\infty} e^{-x} (x T)^{\beta} \, \left( \frac{4 T}{m} \right) x \sqrt{\frac{T}{m}}\, \frac{\dd x}{\sqrt{x}} = \frac{2}{\sqrt{\pi}} \, T^{\beta} \int_0^{+\infty} e^{-x} x^{\beta + 1/2} \, \dd x,
\end{equation*}
which allows to recover \eqref{eq:momentbetav}.

\medskip

To deal with \eqref{eq:momentbetai}, it is enough to compute the following integral, which depends on both $\delta$ and $\beta$,
$$\mathcal{I}_\beta = \iint_{(0,+\infty)^2} e^{-(I+I_*)/T} \, (I+I_*)^{\beta} \, (I I_*)^{\delta/2- 1} \, \dd I \, \dd I_*,$$
for any relevant $\beta$, since $\mathbf{m}_{i}^{\beta}(T)=\mathcal{I}_\beta/\mathcal{I}_0$. Thanks to Lemma~\ref{l:exprmmphi}, the change of variable $I_*\mapsto E_i'=I+I_*$ leads to
$$\mathcal{I}_\beta =\int_0^{+\infty} e^{-E_i/T} \, {E_i}^{\beta} \, \mathfrak{m}_{\varphi}'(E_i) \, \dd E_i = \frac{\Gamma(\delta/2)^2}{\Gamma(\delta)} \int_0^{+\infty} e^{-E_i/T} \, {E_i}^{\beta + \delta - 1} \, \dd E_i,$$
and subsequently to
$$\mathcal{I}_\beta =\frac{\Gamma(\delta/2)^2 \, \Gamma(\beta+\delta)}{\Gamma(\delta)} \, T^{\beta+\delta}.$$
\end{proof}

%%%%%%%%%%%%%%%%%%%%%%%%%%%%

\subsection*{Acknowledgements} 
The authors are grateful to Professor Kazuo Aoki for useful discussions.

%%%%%%%%%%%%%%%%%%%%%%%%%%%%

\bibliographystyle{abbrv}
\bibliography{biblio}

\end{document}